\documentclass[12pt]{article}

\RequirePackage{amsthm,amsmath,amsfonts,amssymb}
\RequirePackage[numbers]{natbib}
\RequirePackage{enumerate,multirow,bbm,bm,subfig,graphicx}
\RequirePackage[colorlinks]{hyperref}

\theoremstyle{plain}

\newtheorem{thm}{Theorem}
\newtheorem{lem}[thm]{Lemma}
\newtheorem{definition}{Definition}
\newtheorem{prop}[thm]{Proposition}
\newtheorem{rem}[thm]{Remark}
\newtheorem{assumption}[thm]{Assumption}

\renewcommand{\hat}{\widehat}

\usepackage{mathtools}
\usepackage{todonotes}
\newcommand{\R}{\mathbb{R}} %The R set

\title{\Large Conformal Robust Set Estimation}
\author{
Alejandro Cholaquidis\thanks{Centro de Matemática, Facultad de Ciencias, Universidad de la República, Uruguay}
\and
Emilien Joly\thanks{Centro de Investigación en Matemáticas (CIMAT), México}
\and
Leonardo Moreno\thanks{Departamento de Métodos Cuantitativos, Facultad de Ciencias Económicas y de Administración, Universidad de la República, Uruguay}
}
\date{} % leave empty (no date)

\begin{document}

\maketitle

\begin{abstract}
Conformal prediction provides finite-sample, distribution-free coverage under exchangeability, but standard constructions may lack robustness in the presence of outliers or heavy tails. We propose a robust conformal method based on a non-conformity score defined as the half-mass radius around a point, equivalently the distance to its $(\lfloor n/2\rfloor+1)$-nearest neighbour.

We show that the resulting conformal regions are marginally valid for any sample size and converge in probability to a robust population central set defined through a distance-to-a-measure functional. Under mild regularity conditions, we establish exponential concentration and tail bounds that quantify the deviation between the empirical conformal region and its population counterpart. These results provide a probabilistic justification for using robust geometric scores in conformal prediction, even for heavy-tailed or multi-modal distributions.
\end{abstract}

	\section{Introduction}

	Conformal prediction is a general methodology for constructing prediction sets with finite-sample, distribution-free coverage guarantees under the sole assumption of exchangeability of the data
	\cite{vovk98,fontana2023}. Given a sample $\aleph_n=\{X_1,\dots,X_n\}$ and a new observation $X_{n+1}$, conformal methods build, for a fixed $\alpha\in (0,1)$, a data-dependent set $\gamma^\alpha(\aleph_n)$ such that
	\[
	\mathbb{P}\bigl(X_{n+1}\in \gamma^\alpha(\aleph_n)\bigr)\ \ge 1-\alpha
	\]
	for all distributions of $(X_i)_{i\ge1}$ that are exchangeable. This finite-sample validity, which does not rely on parametric or smoothness assumptions on the underlying distribution, has made conformal prediction an attractive tool across statistics and machine learning. At a conceptual level, conformal prediction transforms a user-chosen notion of ``atypicality'' into a set-valued predictor calibrated through symmetry (exchangeability), thereby decoupling \emph{coverage} from modelling assumptions.

	Originally developed in the context of classification \cite{vovk98,vovk03}, conformal prediction has since been extended in many directions, including regression and conditional coverage \cite{waser14,waser14_2,kuleshov2018conformal,romano2019conformalized}, Bayesian and pseudo-Bayesian procedures \cite{fong2021conformal}, functional and time series data \cite{lei2015conformal,fontana20,diqui22}, and a wide range of applied problems; see \cite{bala14,fontana2023} for recent overviews. In regression, the predominant practice is to construct non-conformity scores from residuals (possibly after a suitable regression fit), and then form prediction intervals or sets by thresholding these scores. When the score captures the relevant geometry of the underlying distribution, the resulting prediction region can be both informative (small) and stable.

	Despite this variety of applications, comparatively less attention has been paid to the \emph{geometric} structure of conformal prediction regions, and in particular to their robustness properties. In the presence of outliers, heavy tails, or multi-modal structure, residual-based scores may yield prediction sets that are unnecessarily large, strongly influenced by a small fraction of atypical points, or poorly aligned with the underlying geometry of the distribution. At the same time, robust geometric functionals such as distances to measures and depth-based central sets have been extensively studied in geometric and topological inference, where they are known to enjoy stability under perturbations and to be insensitive to small contaminations of the data
	\cite{chazal2011geometric,fasy2018robust,cholaquidis2024gros}. These tools suggest a natural direction: build conformal scores from robust geometric quantities that encode local mass rather than global moments.

	In this work we bridge these two perspectives by introducing a conformal procedure based on a robust geometric functional of \emph{distance-to-a-measure} type. Concretely, working on $\R^d$ with the Euclidean distance, we consider the \emph{half-mass radius} around a point $z$: the smallest radius of a ball centred at $z$ that contains at least half of the sample. Equivalently, this is the distance from $z$ to its $(\lfloor n/2\rfloor+1)$-nearest neighbour (in what follows, $\mathbf{k}$-NN for short). It can be viewed as an empirical version of the so called distance-to-a-measure functional $\delta_P=\inf\{r:P(B(x,r)>1/2)\}$ (see Definition \ref{def1} below). It was introduced in  \cite{chazal2011geometric}.  This quantity is known to be stable under perturbations of the underlying distribution and robust to small fractions of outliers, and thus it provides a natural candidate for a non-conformity score in a robust conformal framework.

	\paragraph{Our contribution.}

	We propose and analyse a robust conformal prediction method in $\R^d$ whose non-conformity score is the half-mass radius around each point, that is, the distance to its $\mathbf{k}$-NN. This score coincides with an empirical distance-to-a-measure functional, which links the resulting conformal region to robust central sets defined via level sets of the population distance-to-a-measure $\delta_P$.

	Our main contributions are as follows.
	\begin{itemize}
		\item We define, for $z\in\mathbb{R}^d$ and $\mathcal{B}\subset[n]$ non-empty,  a conformal procedure based on the non-conformity score
		\[
		A(\mathcal B,z)
		\;=\;
		\min_{I\subset \mathcal B:  |I|>n/2}\ \max_{X_i\in I}\|z-X_i\|,
		\]
	where $\|\cdot\|$ denotes the euclidean norm and $|I|$ is the cardinality of $I$. We prove that $A(\mathcal B,z)$ is exactly the distance from $z$ to its $\mathbf{k}$-NN in  $\mathcal B$. This identifies our procedure with a $\mathbf{k}$-NN–type conformal method, directly tied to the empirical distance-to-a-measure functional.

		\item We show that the resulting conformal prediction set $\gamma^\alpha(\aleph_n)$ converges, in a strong geometric sense, to a robust central set
\begin{equation}\label{qbeta}
		Q_{\beta_\alpha}
		\;=\;
		\{x\in\R^d:\ \delta_P(x)\le \beta_\alpha\}.
\end{equation}
		  The parameter $\beta_\alpha$ is chosen so that $P(Q_{\beta_\alpha})=1-\alpha$. More precisely, we prove that
		\[
		\mathbb{P}\Bigl(X_{n+1}\in \gamma^\alpha(\aleph_n)\triangle Q_{\beta_\alpha}\Bigr)\ =\ o(1),
		\]
		where $\triangle$ denotes  symmetric difference. The conformal region asymptotically recovers a well-defined robust central region of the underlying distribution.

		\item We derive high-probability exponential bounds controlling the discrepancy between $\gamma^\alpha(\aleph_n)$ and $Q_{\beta_\alpha}$, as well as tail bounds that describe how quickly the conformal set concentrates around the population central set. These results make explicit how the geometry of the distribution, encoded through $\delta_P$, governs the behaviour of the conformal region.

		\item We provide geometric representations and computationally useful descriptions of the empirical central sets
\begin{equation}\label{qbetagorro}
		\hat{Q}_\beta=\bigl\{z\in\R^d : \delta_{P^{\aleph_n}}(z)\le\beta\bigr\},
\end{equation}
	where $P^{\aleph_n}$ denotes the empirical measure. More precisely, we express $\hat{Q}_\beta$ as unions and intersections of Euclidean balls. We also construct conservative proxies for these sets based on local
		radii \(\mathcal{D}_i\) around the sample points (defined in subsection~\ref{conservative}),
		which can be exploited in practice to approximate the target region using only
		pairwise distances.
	\end{itemize}

	Overall, our results show that conformal prediction based on half-mass radii yields prediction regions that are not only marginally valid in finite samples, but also converge to robust, geometrically meaningful central sets determined by the underlying distribution. This provides a principled way to integrate robust geometric inference into the conformal prediction framework.

	\paragraph{Organization.}
	Section~\ref{confinf} reviews conformal inference and fixes notation.
	Section~\ref{robustconf} introduces the half-mass radius score and relates it to a $k$-NN distance.
	Section~\ref{consistency} establishes consistency and concentration statements linking the conformal region to a population distance-to-a-measure level set.
	We then derive tail bounds and geometric convergence in Hausdorff distance, and conclude with computational representations and conservative approximations.

	\section{Notation and some background on conformal inference}

	We consider $\mathbb{R}^d$ endowed with the Euclidean inner product $\langle \cdot, \cdot \rangle$, and  denote by $\|\cdot\|$ the Euclidean norm. The symmetric difference between two sets $A$ and $C$ is denoted by $A\triangle C$. We denote by $B(x,r)$ the closed ball in $\mathbb{R}^d$, and by $\mathring{B}(x,r)$ the open ball. We denote by $[n]=\{1,\dots,n\}$ and given a finite set $I$, its cardinality is denoted by $|I|$. Given a random vector $X$ taking values in $\mathbb{R}^d$, its support is denoted by $\text{supp}(X)$. The distribution of $X$ is denoted by $P$. Recall that, given $n\in\mathbb{N}$ we denote $\mathbf k=\lfloor n/2\rfloor+1$.

	\medskip
	The distance function to a probability measure $P$ on $\mathbb{R}^d$ is introduced in \cite{chazal2011geometric}; see also \cite{fasy2018robust} for an application to topological data analysis. In our context, it will serve both as a population target (a robust geometric functional of $P$) and as a bridge between geometric inference and conformal calibration.

	\begin{definition}\label{def1}
	    Given a probability distribution $P$, $0\leq m< 1$ and $x\in \mathbb{R}^d$, define the pseudo-distance $\delta_{P,m}$ by
	    $$\delta_{P,m}(x):=\inf\{r>0:P(B(x,r))>m\}.$$
	\end{definition}

	The function $\delta_{P,m}$ is 1-Lipschitz (see \cite{chazal2011geometric}). In general we will take $m=1/2$ and denote just $\delta_{P}$ instead of $\delta_{P,1/2}$. The choice $m=1/2$ corresponds to a median-type notion of local scale: it is sensitive to the ``central half'' of the mass and therefore naturally robust to small contaminations.

	\subsection{Conformal Inference} \label{confinf}

	In this section, we briefly provide the basic foundations of conformal inference within the broader framework, to facilitate the reading of the following sections. For a more detailed reading see \cite{fontana2023}. A key hypothesis in conformal inference is the exchangeability assumption, which means that for any permutation $\pi$ of $\{1,\dots,n\}$ the distribution of $(X_1,\dots,X_n)$, where $X_i$ is a $\mathbb{R}^d$-valued random element, is the same as the distribution of $(X_{\pi(1)},\dots,X_{\pi(n)})$. Under this symmetry, conformal methods produce valid prediction sets by comparing the rank of a candidate point’s score to the distribution of scores obtained by treating the candidate as if it were an observed datum.

	A non-conformity measure  $A(\mathcal{B},z)$ is a way of scoring how different an example $z\in \mathbb{R}^d$ is from  the bag  $\aleph_n:=\{X_1,\dots,X_n\}$. We define the bags $\mathcal{B}_i=\mathcal{B}_i(z)=\{\aleph_n\cup \{z\}\}\setminus \{X_i\}$ for $i=1,\dots,n$;    $\mathcal{B}_{n+1}=\aleph_n$; $X_{n+1}=z$, and
	$$R_i=A(\mathcal{B}_i,X_i) \qquad i=1,\dots,n+1.$$

	Let us define
	$$p_{z} = |\{i = 1,\ldots,n+1 : R_i \geq R_{n+1}\}|/(n+1).$$
	The conformal $p$-value $p_z$ measures how extreme the score of $z$ is relative to the empirical distribution of leave-one-out scores. Large $p_z$ indicates that $z$ is not more atypical than most of the points in the augmented sample.

	For $\alpha\in (0,1)$ the prediction set is
	\[\gamma^\alpha(\aleph_n) = \{z \in \mathbb{R}^d : p_z > \alpha\}.\]

	The following proposition is given in \citep{vovk03}.

	\begin{prop}
		Under the exchangeability assumption,
		$$\mathbb{P}(X_{n+1} \not\in \gamma^\alpha(\aleph_n))\leq \alpha\quad \text{ 	for any  } \alpha\in (0,1).$$
	\end{prop}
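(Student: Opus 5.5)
The argument rests on exchangeability of the $n+1$ scores $R_1,\dots,R_{n+1}$, where we set $X_{n+1}=z$ at the random point $z=X_{n+1}$. First observe that, with $z=X_{n+1}$, the augmented multiset $\{X_1,\dots,X_{n+1}\}$ is fixed. Each bag $\mathcal B_i$ is this multiset with $X_i$ removed. Hence $R_i=A(\mathcal B_i,X_i)=g(X_i;\{X_1,\dots,X_{n+1}\})$ for a single function $g$ that is symmetric in the multiset argument. It follows that permuting $(X_1,\dots,X_{n+1})$ by $\pi$ permutes the vector $(R_1,\dots,R_{n+1})$ by the same $\pi$. Since $(X_1,\dots,X_{n+1})$ is exchangeable, so is $(R_1,\dots,R_{n+1})$.

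Next, note that $X_{n+1}\notin\gamma^\alpha(\aleph_n)$ exactly when $p_{X_{n+1}}\le\alpha$. This means $|\{i:R_i\ge R_{n+1}\}|\le \alpha(n+1)$. Equivalently, $R_{n+1}$ is among the $\lfloor\alpha(n+1)\rfloor$ largest scores in the following sense. Define, for each $j$, the event
\[
E_j=\Bigl\{\,|\{i:R_i\ge R_j\}|\le \alpha(n+1)\Bigr\}.
\]
By exchangeability, $\mathbb P(E_j)=\mathbb P(E_{n+1})$ for every $j$, so
\[
\mathbb P(E_{n+1})=\frac{1}{n+1}\,\mathbb E\Bigl[\sum_{j=1}^{n+1}\mathbf 1_{E_j}\Bigr].
\]

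The key deterministic step, which I expect to be the only real obstacle because of ties, is the bound $\sum_j\mathbf 1_{E_j}\le\alpha(n+1)$ for any real numbers $R_1,\dots,R_{n+1}$. Let $N(j)=|\{i:R_i\ge R_j\}|$, and let $S$ be the set of $j$ with $N(j)\le\alpha(n+1)$. Take $j^\ast\in S$ with the smallest value $R_{j^\ast}$ among $S$. Every $j\in S$ satisfies $R_j\ge R_{j^\ast}$, so $S\subset\{i:R_i\ge R_{j^\ast}\}$. Therefore $|S|\le N(j^\ast)\le\alpha(n+1)$. Ties are handled automatically because $N$ counts with $\ge$.

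Combining these steps gives $\mathbb P(X_{n+1}\notin\gamma^\alpha(\aleph_n))\le\alpha$. A minor point is to check that the events $E_j$ are measurable. This holds because the $R_i$ are measurable functions of the data; for the score $A$ of the paper they are order statistics of distances, which are measurable.
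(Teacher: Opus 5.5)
Your proof is correct. Reading the bags as multisets gives $R_i=g(X_i;\{X_1,\dots,X_{n+1}\})$, and together with exchangeability of the full $(n+1)$-vector (which is what is actually needed, not the $n$-fold version the paper writes) this makes the scores exchangeable; your deterministic bound $|\{j:N(j)\le\alpha(n+1)\}|\le\alpha(n+1)$ then handles ties cleanly, with the empty case trivial. The paper gives no proof of its own and only cites Vovk et al.; your exchangeability-plus-counting argument is the standard proof behind that citation.
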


	This statement is purely distribution-free and does not depend on the choice of $A(\mathcal{B},z)$. The role of the non-conformity measure is therefore not to ensure validity, but to control the \emph{shape} and \emph{efficiency} of the prediction region.

	\medskip
	Given a bag $\mathcal{B}$ the empirical measure associated to $\mathcal{B}$ is denoted by $P^\mathcal{B}$. From Definition \ref{def1} it follows that,
	$$R_i=\delta_{P^{\mathcal{B}_i}}(X_i) \qquad \text{ and } \qquad R_{n+1}=\delta_{P^{\mathcal{B}_{n+1}}}(z)=\delta_{P^{\aleph_n}}(z).$$
	This identity will be the core mechanism in the subsequent analysis: once $A$ is chosen to coincide with an empirical distance-to-a-measure functional, the conformal calibration step becomes a rank comparison among empirical half-mass radii computed across the conformal bags.

	\section{A robust conformal inference approach}\label{robustconf}

	Given a sample $\aleph_n=\{X_1,\dots,X_n\}$. Inspired in  the robust strategy introduced in \cite{cholaquidis2024gros}, we will define the non-conformity measure $A$ introduced in Section \ref{confinf} as
	\begin{equation}\label{funcA}
	A(\mathcal{B},z)=\min_{I\subset \mathcal{B}:|I|>n/2} \ \ \max_{X_i\in I} \|z-X_i\|,
	\end{equation}
	where $z\in \mathbb{R}^d$, $\mathcal{B}\subset \aleph_n\cup \{z\}$ fulfils $|\mathcal{B}|=n$.

	\medskip
	The score \eqref{funcA} can be viewed as a ``majority radius'' around $z$: among all strict majorities of the bag, it takes the smallest radius needed to cover such a majority by a ball centered at $z$. This makes the construction robust: up to nearly half of the points can be arbitrarily far from $z$ without forcing the score to be large, because the minimization allows the method to ignore a minority of extreme points.

	\medskip
	We now show that \eqref{funcA} is equivalent to a $\mathbf{k}$-NN distance. 

	\begin{lem}\label{prop:minmax=knn}
		For any finite $\mathcal B\subset\R^d$ with $|\mathcal B|=n$ and any $z\in\R^d$,
		\[
		A(\mathcal B,z)\ =\ \text{distance from $z$ to its $\mathbf{k}$\rm{-NN} in $\mathcal B$}.
		\]
	\end{lem}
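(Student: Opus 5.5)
Order the points of $\mathcal B$ by their distance to $z$. Write $d_{(1)}\le d_{(2)}\le\dots\le d_{(n)}$ for the sorted values $\|z-X\|$, $X\in\mathcal B$, with ties broken arbitrarily. The first observation is that the condition $|I|>n/2$ is equivalent to $|I|\ge \lfloor n/2\rfloor+1=\mathbf k$. Indeed, $|I|$ is an integer, and the smallest integer strictly larger than $n/2$ is $\lfloor n/2\rfloor+1$ whether $n$ is even or odd. So the minimization in \eqref{funcA} runs over subsets of $\mathcal B$ with at least $\mathbf k$ elements, and the target quantity is $d_{(\mathbf k)}$. We will show $A(\mathcal B,z)=d_{(\mathbf k)}$ by proving the two inequalities.

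For the upper bound, take $I^\ast$ to be the $\mathbf k$ points of $\mathcal B$ closest to $z$, that is, those realizing $d_{(1)},\dots,d_{(\mathbf k)}$. This set is admissible since $|I^\ast|=\mathbf k>n/2$, and $\max_{X_i\in I^\ast}\|z-X_i\|=d_{(\mathbf k)}$. Hence $A(\mathcal B,z)\le d_{(\mathbf k)}$.

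For the lower bound, let $I\subset\mathcal B$ be any set with $|I|\ge\mathbf k$. We claim $\max_{X_i\in I}\|z-X_i\|\ge d_{(\mathbf k)}$. Suppose instead that every point of $I$ were at distance strictly less than $d_{(\mathbf k)}$ from $z$. Every such point is among those realizing $d_{(1)},\dots,d_{(j)}$, where $j$ is the largest index with $d_{(j)}<d_{(\mathbf k)}$. Necessarily $j\le\mathbf k-1$, so $|I|\le\mathbf k-1$, a contradiction. Taking the minimum over admissible $I$ gives $A(\mathcal B,z)\ge d_{(\mathbf k)}$.

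The minimum in \eqref{funcA} is over a finite nonempty family, because $n\ge\mathbf k$, so it is attained and no infimum issues arise. The only delicate point is handling ties in the distances. This is why the argument is phrased with sorted values $d_{(\cdot)}$ rather than with particular points: the $\mathbf k$-NN distance is well defined even though the $\mathbf k$-th nearest neighbour itself may not be unique. Finally, we note that this agrees with $\delta_{P^{\mathcal B}}(z)$ from Definition \ref{def1}, as the paper claims. $P^{\mathcal B}(B(z,r))>1/2$ holds iff at least $\mathbf k$ points of $\mathcal B$ lie in $B(z,r)$, which happens iff $r\ge d_{(\mathbf k)}$, so the infimum equals $d_{(\mathbf k)}$.
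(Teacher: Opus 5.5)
Your proof is correct and follows essentially the same route as the paper: the upper bound comes from exhibiting the $\mathbf{k}$ nearest points, and the lower bound from the fact that at most $\mathbf{k}-1$ points lie strictly closer to $z$ than the $\mathbf{k}$-NN distance (the paper phrases this via the open ball $\mathring{B}(z,r)$). Your extra remarks are also correct: on ties, on the equivalence $|I|>n/2\iff|I|\ge\mathbf{k}$, and on the identification with $\delta_{P^{\mathcal B}}(z)$.
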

	\begin{proof}
		Let $r$ be the $\mathbf{k}$-NN distance of $z$ in $\mathcal B$. Then at least $k$ points lie in $B(z,r)$ and at most $\mathbf{k}-1$ lie in $\mathring{B}(z,r)$. Any subset $I$ of size $\mathbf{k}$ contained in $\mathcal B\cap B(z,r)$ satisfies $\max_{X_i\in I}\|z-X_i\|\le r$, hence $A(\mathcal B,z)\le r$. Conversely, for any $I$ with $|I|>n/2$ (hence $|I|\geq \mathbf{k}$), at least one point of $I$ lies outside $\mathring{B}(z,r)$, so $\max_{X_i\in I}\|z-X_i\|\ge r$. Thus $A(\mathcal B,z)\ge r$.
	\end{proof}

	Lemma~\ref{prop:minmax=knn} shows that our ``min--max over majorities'' score is not an exotic object: it is exactly a nearest-neighbour distance at a high rank, corresponding to the smallest radius that captures a strict majority of points. This provides immediate computational meaning and connects the method to $\mathbf{k}$-NN–type conformal prediction, with the distinctive feature that $\mathbf{k}$ is proportional to $n$ (here, roughly $n/2$), which aligns the score with a robust distance-to-a-measure functional.

	\begin{figure}[h!]
\centering
\includegraphics[width=0.55\textwidth]{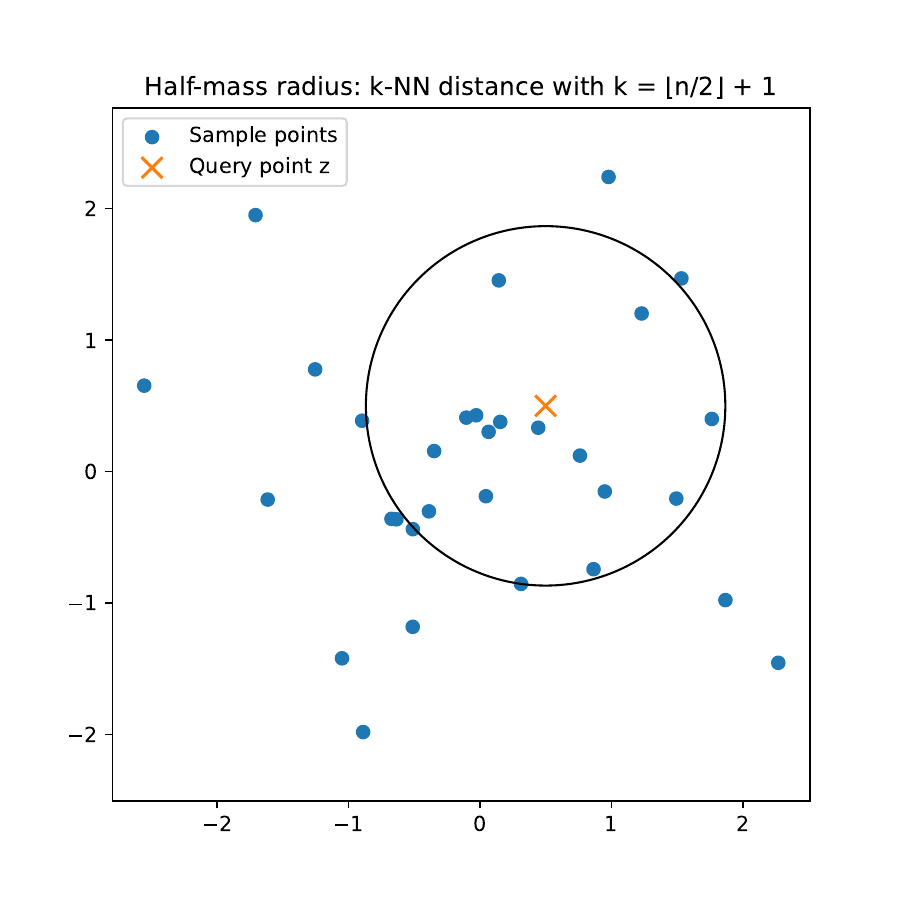}
\caption{Illustration of the half-mass radius. The circle centered at $z$ has radius equal to the distance to its $\mathbf{k}$-NN, which coincides with the non-conformity score $A(\mathcal B,z)$.}
\label{fig:half-mass-radius}
\end{figure}

	\medskip
	Consequently, the minimum \(\min_{I \subset \mathcal{B}:   |I| > n/2}\) can be taken over all subsets of \(\mathcal{B}\) with cardinality \(\lfloor n/2 \rfloor + 1\).

	\section{Consistency}\label{consistency}

	Given a probability distribution $P$, let us define, for $\beta>0$,
	$$Q_\beta=\{z\in \mathbb{R}^d: \delta_{P}(z)\leq \beta\}\qquad \text{ and }\qquad  \beta_\alpha=\inf\{\beta: P(Q_\beta)\geq 1-\alpha\}.$$

	We will prove that
	\begin{equation}\label{consistencia}
		\mathbb{P}\Bigl(X_{n+1}\in \gamma^\alpha(\aleph_n)\triangle Q_{\beta_\alpha}\Bigr) = o(1).
	\end{equation}
	The statement \eqref{consistencia} formalizes the idea that the conformal region is not only valid, but also \emph{targets a population geometric object}. Here the target is the central set $Q_{\beta_\alpha}$ induced by the population functional $\delta_P$, with level chosen to capture mass $1-\alpha$.

	\medskip
	To this end, we first establish several technical lemmas that compare $\delta_{P,m}$ and $\delta_{P^{\mathcal B_i},m}$. These results rely on uniform empirical process bounds over Euclidean balls and a bracketing argument that transfers those bounds to the corresponding mass-radius functionals. They rely on the following theorem (see \cite{pattern} for a proof).

	\begin{thm}[Th. 12.8 in \cite{pattern}]
		\label{thluc}
	 	Let   $P$ be a Borel probability measure on $\mathbb{R}^d$ and $\aleph_n=\{X_1,\dots,X_n\}$ an i.i.d. sample from $P$. Let $P^{\aleph_n}$ be the empirical measure associated to $\aleph_n$. Then, for
		any class of measurable sets $\mathcal A$, and for any $n$ and $\epsilon > 0$,
		$$\mathbb{P}\Big\{\sup_{A \in \mathcal A} |P^{\aleph_n}(A) - P(A)| > \epsilon\Big\} \leq C (n^{2d+4}+1)e^{-2n\epsilon^2},$$
		where $C$ is a constant that does not exceed $4 e^{4 \epsilon+4 \epsilon^2} \leq 4 e^8$ for $\epsilon \leq 1$ and the VC-dimension of $\mathcal{A}=d+2$.
	\end{thm}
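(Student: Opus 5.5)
The plan is to follow the classical route of Devroye, Györfi and Lugosi: a symmetrization argument with a large ghost sample, a conditioning step that reduces to a combinatorial (shatter-coefficient) union bound, and finally Sauer's lemma to turn the shatter coefficient into a polynomial in $n$. Throughout I read the hypothesis as "$\mathcal A$ has VC-dimension $V=d+2$". This is the case relevant for us: closed and open Euclidean balls in $\R^d$ have VC-dimension at most $d+2$. The claimed bound is then the specialisation
\[
\mathbb{P}\Big\{\sup_{A\in\mathcal A}|P^{\aleph_n}(A)-P(A)|>\epsilon\Big\}\le 4e^{4\epsilon+4\epsilon^2}\, s(\mathcal A,n^2)\, e^{-2n\epsilon^2},
\]
where $s(\mathcal A,N)$ denotes the $N$-th shatter coefficient.

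\textbf{Step 1 (symmetrization with a ghost sample).} Introduce an independent ghost sample $X_1',\dots,X_m'$ from $P$ with $m=n^2-n$, and let $P'_m$ be its empirical measure. On the event that some $A$ has $|P^{\aleph_n}(A)-P(A)|>\epsilon$, fix such an $A$ measurably (conditionally on $\aleph_n$). By Hoeffding's inequality applied to the ghost sample, $|P'_m(A)-P(A)|$ is small with conditional probability close to $1$. This yields
\[
\mathbb{P}\{\sup_A|P^{\aleph_n}(A)-P(A)|>\epsilon\}\le c_{n,\epsilon}\,\mathbb{P}\{\sup_A|P^{\aleph_n}(A)-P'_m(A)|>\epsilon-\eta\},
\]
for a suitable slack $\eta$ of order $1/n$. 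The deliberate choice of a ghost sample of size $m\approx n^2$ is what keeps the exponent exactly $2n\epsilon^2$, rather than the cruder $n\epsilon^2/8$ of the usual $m=n$ symmetrization. The price is the factor $e^{4\epsilon+4\epsilon^2}$, which comes from expanding $(\epsilon-\eta)^2$ with $\eta\sim 1/n$.

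\textbf{Step 2 (conditioning on the pooled sample).} Condition on the pooled multiset of $N=n+m=n^2$ points. By exchangeability, the split into the first $n$ and the remaining $m$ points is a uniformly random partition. For a fixed trace $A\cap\{\text{pooled points}\}$, the event $|P^{\aleph_n}(A)-P'_m(A)|>\epsilon-\eta$ is a sampling-without-replacement deviation. Hoeffding's inequality for sampling without replacement bounds it by $2e^{-2n(\epsilon-\eta)^2}$. Since only the trace on the $N$ points matters, a union bound over at most $s(\mathcal A,n^2)$ distinct traces gives the factor $s(\mathcal A,n^2)$.

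\textbf{Step 3 (Sauer's lemma).} By Sauer's lemma, $s(\mathcal A,N)\le (N+1)^V$. With $N=n^2$ and $V=d+2$ this is $(n^2+1)^{d+2}$. Absorbing constants, this is of order $n^{2d+4}+1$, up to a factor $2^{d+2}$ which can be tracked or absorbed into $C$; the statement is somewhat loose on this point anyway.

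The main obstacle is Step 1 with $m=n^2-n$: making the constant exactly $4e^{4\epsilon+4\epsilon^2}$ requires a careful choice of the slack $\eta$ and a careful handling of the lower bound on the Hoeffding conditional probability. The remaining steps are routine. Since this is a textbook result, in the paper I would ultimately cite \cite{pattern} and only verify the VC-dimension bound $d+2$ for Euclidean balls.
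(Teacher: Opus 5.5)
Your outline is the argument behind the cited result. The paper gives no proof and refers to Th.~12.8 of \cite{pattern}, which is Devroye's proof: a ghost sample of size $n^2-n$, conditioning on the pooled $n^2$ points with Hoeffding's inequality for sampling without replacement, a union bound over $s(\mathcal A,n^2)$ traces, then Sauer's lemma. But the statement fixes $C\le 4e^{4\epsilon+4\epsilon^2}$ and the polynomial $n^{2d+4}+1$, so the bookkeeping is what actually has to be proved, and yours is off in three places.

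\textbf{Step 2.} This is the main slip. Hoeffding's without-replacement inequality controls the deviation of $P^{\aleph_n}(A)$ from the pooled empirical mean $\bar P_{n^2}(A)$, not from $P'_m(A)$. Since $P^{\aleph_n}(A)-P'_m(A)=\tfrac{n}{n-1}\bigl(P^{\aleph_n}(A)-\bar P_{n^2}(A)\bigr)$, the correct conditional bound is $2\exp\bigl(-2n(1-\tfrac1n)^2(\epsilon-\eta)^2\bigr)$, not $2e^{-2n(\epsilon-\eta)^2}$. Take $\eta=1/n$ and use $(1-\tfrac1n)(\epsilon-\tfrac1n)\ge\epsilon-\tfrac{1+\epsilon}{n}$. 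This gives $2n(1-\tfrac1n)^2(\epsilon-\tfrac1n)^2\ge 2n\epsilon^2-4\epsilon-4\epsilon^2$, which is exactly the factor $e^{4\epsilon+4\epsilon^2}$. (For $n\ge 2$ and $\epsilon\le 1/(n-1)$ the right-hand side of the theorem is at least $4$, so there is nothing to prove there.) So the $e^{4\epsilon^2}$ comes from the factor $1-1/n$, not from expanding $(\epsilon-\eta)^2$. Expanding $(\epsilon-1/n)^2$ alone would give only $e^{4\epsilon}$, a better constant than the one stated, which signals that a factor was dropped.

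\textbf{Step 1.} The ghost-sample probability is not close to $1$, since $m\eta^2=1-1/n$. What you need is that it is at least $1/2$, and Chebyshev gives this for every $n\ge 2$. Indeed $\mathrm{Var}(P'_m(A))\le 1/(4m)$, so $\mathbb P(|P'_m(A)-P(A)|\ge 1/n)\le n/(4(n-1))\le 1/2$. Hence $c_{n,\epsilon}=2$, and this $2$ times the $2$ from the two-sided Hoeffding bound is the leading $4$. Hoeffding in Step 1 would give $1/2$ only for $n\ge 4$.

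\textbf{Step 3.} The statement is not loose here. The bound $(N+1)^V$ costs a factor $2^{d+2}$, which cannot be absorbed into $C$ without violating $C\le 4e^{4\epsilon+4\epsilon^2}$. The standard form of Sauer's lemma, $s(\mathcal A,N)\le\sum_{i=0}^{V}\binom Ni\le N^V+1$, with $N=n^2$ and $V=d+2$ gives exactly $n^{2d+4}+1$. It also only needs the VC dimension to be at most $d+2$.

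With these three corrections your argument proves the statement as written.
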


	Theorem~\ref{thluc} provides a high-probability uniform bound on the empirical measure of sets in a VC class (here, balls). This is exactly what is needed to compare population and empirical radii \emph{uniformly over centers and radii}, and later uniformly over all conformal bags.

	\begin{lem}\label{lem:vc_bracket_m} With the notation introduced in Section \ref{robustconf}, fix any $m\in[0,1)$ and $\varepsilon\in(0,1)$, and define the event
		\[
		\mathcal{R}(\varepsilon):=\Bigl\{\ \sup_{1\leq i\le n+1}\ \sup_{x\in\R^d,  r\ge0}  
		\bigl|P^{\mathcal{B}_i}\big(B(x,r)\big)-P\big(B(x,r)\big)\bigr|\ \le\ \varepsilon\ \Bigr\}.
		\]
		On $\mathcal{R}(\varepsilon)$ we have, for every $i\in[n+1]$ and $y\in\rm{supp}(X)$,
		\begin{equation}\label{eq:bracket_m}
			\delta_{P,  [m-\varepsilon]_+}(y)\ \le\ \delta_{P^{\mathcal{B}_i},  m}(y)\ \le\ \delta_{P,  \min\{m+\varepsilon,1\}}(y),
		\end{equation}
		where $[a]_+:=\max\{a,0\}$. In particular, for all $i\in [n+1]$,
		\begin{align}\label{eq:absdiff_m}
			\bigl|\delta_{P^{\mathcal{B}_i},  m}(y)-\delta_{P,  m}(y)\bigr|
			\ \le\
			\max\Big\{&\delta_{P,  \min\{m+\varepsilon,1\}}(y)-\delta_{P,  m}(y),\\[-1ex]
			&\delta_{P,  m}(y)-\delta_{P,  [m-\varepsilon]_+}(y)\Big\}. \nonumber
		\end{align}
	\end{lem}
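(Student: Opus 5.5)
The bracketing \eqref{eq:bracket_m} is a purely deterministic consequence of the uniform ball bound on $\mathcal{R}(\varepsilon)$, so I will prove it by comparing the sets of admissible radii in the three infima. Throughout, fix $i\in[n+1]$ and $y$. Two elementary facts will be used. First, $m'\mapsto\delta_{P,m'}(y)$ is nondecreasing, because the set $\{r>0: P(B(y,r))>m'\}$ shrinks as $m'$ grows. Second, $r\mapsto P(B(y,r))$ is nondecreasing, so each of these admissible sets is an up-interval of $(0,\infty)$.

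\emph{Upper bound.} Set $m^+=\min\{m+\varepsilon,1\}$. If $m+\varepsilon\ge 1$, then $\delta_{P,1}(y)=\inf\emptyset=+\infty$ and there is nothing to prove. Otherwise, take any $r>0$ with $P(B(y,r))>m+\varepsilon$. On $\mathcal{R}(\varepsilon)$ this gives
\[
P^{\mathcal{B}_i}(B(y,r))\ \ge\ P(B(y,r))-\varepsilon\ >\ m,
\]
so $r$ is admissible for $\delta_{P^{\mathcal{B}_i},m}(y)$. Hence the admissible set for $\delta_{P,m^+}(y)$ is contained in the admissible set for $\delta_{P^{\mathcal{B}_i},m}(y)$, and taking infima yields $\delta_{P^{\mathcal{B}_i},m}(y)\le\delta_{P,m^+}(y)$.

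\emph{Lower bound.} Set $m^-=[m-\varepsilon]_+$. Take any $r>0$ with $P^{\mathcal{B}_i}(B(y,r))>m$. On $\mathcal{R}(\varepsilon)$,
\[
P(B(y,r))\ \ge\ P^{\mathcal{B}_i}(B(y,r))-\varepsilon\ >\ m-\varepsilon .
\]
This gives $P(B(y,r))>m^-$ when $m\ge\varepsilon$. When $m<\varepsilon$ we have $m^-=0$, and we need $P(B(y,r))>0$ instead. Here the hypothesis $y\in\mathrm{supp}(X)$ is used: every closed ball of positive radius around a support point has positive $P$-mass. So in both cases $r$ is admissible for $\delta_{P,m^-}(y)$, and taking infima yields $\delta_{P,m^-}(y)\le\delta_{P^{\mathcal{B}_i},m}(y)$. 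The only delicate point in the whole argument is this boundary case $m^-=0$, together with $\delta_{P,1}=+\infty$ in the upper bound; strictness of the inequalities poses no difficulty because only inclusions of admissible sets are needed.

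\emph{Absolute difference.} Monotonicity in $m$ gives $\delta_{P,m^-}(y)\le\delta_{P,m}(y)\le\delta_{P,m^+}(y)$. Therefore both $\delta_{P^{\mathcal{B}_i},m}(y)$ and $\delta_{P,m}(y)$ lie in the interval $[\delta_{P,m^-}(y),\,\delta_{P,m^+}(y)]$. Their difference is bounded above by $\delta_{P,m^+}(y)-\delta_{P,m}(y)$, and below by $-(\delta_{P,m}(y)-\delta_{P,m^-}(y))$. Together these give \eqref{eq:absdiff_m}.
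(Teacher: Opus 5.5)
Your proof is correct and follows essentially the same route as the paper: a two-sided comparison of radii using the uniform ball deviation on $\mathcal{R}(\varepsilon)$, the support hypothesis to handle the case $m<\varepsilon$, and monotonicity of $m'\mapsto\delta_{P,m'}(y)$ for the absolute-difference bound. Your formulation via inclusion of admissible radius sets is slightly cleaner than the paper's upper-bound step, which asserts $P(B(y,r_+))>\min\{m+\varepsilon,1\}$ at the infimum $r_+$ itself. That claim can fail, since for a continuous $P$ one typically has equality at $r_+$; your argument avoids this by only ever applying the deviation bound at radii that are genuinely admissible.
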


	\begin{proof}
		Fix $i\in [n+1]$ and $y\in \mathbb{R}^d$.
		Let $r_+:=\delta_{P,  \min\{m+\varepsilon,1\}}(y)$. By definition, $P\big(B(y,r_+)\big)>\min\{m+\varepsilon,1\}\ge m+\varepsilon$ unless $m+\varepsilon\ge1$, in which case the inequality is trivial. In either case,
		\[
		P^{\mathcal{B}_i}\big(B(y,r_+)\big)\ \ge\ P\big(B(y,r_+)\big)-\varepsilon\ >\ m+\varepsilon-\varepsilon\ =\ m.
		\]
		Since $r\mapsto P^{\mathcal{B}_i}\big(B(y,r)\big)$ is non-decreasing, this shows $\delta_{P^{\mathcal{B}_i},  m}(y)\le r_+$.

	Set $r_-:=\delta_{P,  [m-\varepsilon]_+}(y)$ and let $r<r_-$. Then $P(B(y,r))\le [m-\varepsilon]_+$. Consider two cases:
	 If $m\ge\varepsilon$, then $[m-\varepsilon]_+=m-\varepsilon$, so on $\mathcal R(\varepsilon)$
	\[
	P^{\mathcal{B}_i}(B(y,r))\ \le\ P(B(y,r))+\varepsilon\ \le\ (m-\varepsilon)+\varepsilon\ =\ m,
	\]
	hence no $r<r_-$ attains mass $>m$ and $\delta_{P^{\mathcal{B}_i},m}(y)\ge r_-$.
	If $m<\varepsilon$, then $[m-\varepsilon]_+=0$ and $r_-=\delta_{P,0}(y)=0$ because $y\in \text{supp}(X)$. The desired inequality is $\delta_{P^{\mathcal{B}_i},m}(y)\ge 0$, which is trivially true.
	 Finally, since $\delta_{P^{\mathcal{B}_i},m}(y)\in\big[\delta_{P,[m-\varepsilon]_+}(y),  \delta_{P,\min\{m+\varepsilon,1\}}(y)\big]$, \eqref{eq:absdiff_m} follows by taking the maximum deviation from the midpoint $\delta_{P,m}(y)$.
	\end{proof}

	Lemma~\ref{lem:vc_bracket_m} states that if all empirical measures $P^{\mathcal{B}_i}$ approximate $P$ uniformly over balls, then the corresponding radii $\delta_{P^{\mathcal{B}_i},m}$ are sandwiched between two nearby population radii. This is the key deterministic bridge: an empirical process event yields a uniform geometric control of the distance-to-a-measure functional.

	The following proposition is a direct consequence of Theorem \ref{thluc}.

	\begin{prop} \label{prop:uniform_hp_m}
		Let $\mathcal{A}=\{B(x,r):x\in\R^d,  r\ge0\}$ be the class of Euclidean balls, with VC-dimension $V$ (e.g.\ $V=d+1$). There exist constants $C_1,C_2>0$ such that, for any $\varepsilon\in(0,1)$,
		\[
		\mathbb{P}\big(\mathcal{R}(\varepsilon)^c\big)\ \le\ (n+1)  C_1  (n^{2V}+1)  e^{-C_2 n\varepsilon^2},
		\]
		where $\mathcal{R}(\varepsilon)$ is the event introduced in Lemma \ref{lem:vc_bracket_m}.
	\end{prop}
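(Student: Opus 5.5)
The plan is a union bound over the $n+1$ bags $\mathcal B_1,\dots,\mathcal B_{n+1}$, combined with a reduction of each bag to an i.i.d.\ empirical measure to which Theorem~\ref{thluc} applies. The only subtlety is that the bags $\mathcal B_i$ for $i\le n$ contain the point $z=X_{n+1}$, so they are not i.i.d.\ samples of size $n$ drawn from the original $\aleph_n$. I would handle this by viewing $z=X_{n+1}$ as a further i.i.d.\ draw from $P$, which is the setting in which the conformal scores are evaluated in \eqref{consistencia}.

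\emph{Step 1 (i.i.d.\ bags).} Under this convention, for each $i\in[n+1]$ the bag $\mathcal B_i=\{X_1,\dots,X_{n+1}\}\setminus\{X_i\}$ consists of $n$ i.i.d.\ draws from $P$. This holds for $i=n+1$ trivially ($\mathcal B_{n+1}=\aleph_n$) and for $i\le n$ by exchangeability of the i.i.d.\ sequence. Hence $P^{\mathcal B_i}$ has the same law as $P^{\aleph_n}$. If instead $z$ is treated as a fixed deterministic point, I would compare directly: $\mathcal B_i$ and $\aleph_n$ differ in one point, so
\[
\sup_{A}\bigl|P^{\mathcal B_i}(A)-P^{\aleph_n}(A)\bigr|\le 1/n .
\]
On $\{\sup_A|P^{\aleph_n}(A)-P(A)|\le\varepsilon/2\}$ this gives deviation at most $\varepsilon$ whenever $n\ge 2/\varepsilon$. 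For smaller $n$, the bound to be proved is trivial once $C_1$ is large enough, since $n\varepsilon^2<4/n\cdot n\cdot$ const keeps $e^{-C_2n\varepsilon^2}$ bounded below. Either route works; I would present the i.i.d.\ version and remark on the deterministic one.

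\emph{Step 2 (apply Theorem~\ref{thluc}).} For each fixed $i$, apply Theorem~\ref{thluc} with $\mathcal A$ the class of closed balls. The theorem is stated with exponent $2d+4$, i.e.\ $2V$ with $V=d+2$; any valid VC bound for balls suffices, with the exponent absorbed into the statement's $n^{2V}$. This gives
\[
\mathbb P\Bigl(\sup_{x,r}\bigl|P^{\mathcal B_i}(B(x,r))-P(B(x,r))\bigr|>\varepsilon\Bigr)\le 4e^8\,(n^{2V}+1)\,e^{-2n\varepsilon^2}
\]
for $\varepsilon<1$, so the constant bound $C\le 4e^8$ applies.

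\emph{Step 3 (union bound).} Since $\mathcal R(\varepsilon)^c=\bigcup_{i=1}^{n+1}\{\text{bag }i\text{ deviates by more than }\varepsilon\}$, the union bound yields the claim with $C_1=4e^8$ and $C_2=2$. In the deterministic-$z$ variant one takes $C_2=1/2$ and enlarges $C_1$.

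The main obstacle is Step 1, justifying that each bag is an i.i.d.\ sample despite containing $z$. Everything else is a direct application of Theorem~\ref{thluc}.
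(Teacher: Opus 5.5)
Your proof is correct and matches the paper's argument, which the paper only calls a ``direct consequence'' of Theorem~\ref{thluc}: with the test point an independent draw from $P$, each bag $\mathcal B_i$ is an i.i.d.\ sample of size $n$, Theorem~\ref{thluc} bounds each bag's deviation, and a union bound over the $n+1$ bags gives the claim with $C_1=4e^8$ and $C_2=2$. Your deterministic-$z$ variant is also valid and slightly stronger: since $\sup_A|P^{\mathcal B_i}(A)-P^{\aleph_n}(A)|\le 1/n$, it gives the bound for all $z$ simultaneously and without the factor $n+1$.
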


	Proposition~\ref{prop:uniform_hp_m} upgrades the uniform VC bound to the collection of all $n+1$ conformal bags. This is essential because conformal prediction compares $R_{n+1}$ to the entire vector $(R_i)_{i\le n}$, each computed on a different bag.

\begin{rem}\label{BC}
	An immediate consequence of the previous proposition is that, taking
	$\varepsilon_n = K\sqrt{\log n / n}$ with $K>0$ large enough and applying a
	union bound followed by the Borel--Cantelli lemma, we have
	$\mathbb{P}(\mathcal{R}(\varepsilon_n)\ \text{eventually})=1$. In particular,
	on $\mathcal{R}(\varepsilon_n)$ the bracketing \eqref{eq:bracket_m} holds
	uniformly in $i$ and $y$ for all sufficiently large $n$.
\end{rem}

	\begin{prop} \label{cor:margin_m}
	Let $m\in[0,1)$.	Assume that there exist $u_0$ and $\kappa$ such that $0<u_0\leq m$, $\kappa>0$ and, for all $y\in\R^d$ and $0\leq u\le u_0$,
		\begin{equation}\label{eq:margin_m}
			\delta_{P,  m+u}(y)-\delta_{P,  m-u}(y)\ \le\ \kappa  u.
		\end{equation}
		Let $\varepsilon_n\le \min\{u_0,  m,  1-m\}$. Then, 
	\begin{equation} \label{epsn}
		\mathbb{P}\Bigg(\sup_{1\leq i\le n+1} \sup_{y\in\R^d} \bigl|\delta_{P^{\mathcal{B}_i},  m}(y)-\delta_{P,  m}(y)\bigr|
		  >  \kappa  \varepsilon\Bigg)\leq \mathbb{P}(\mathcal{R}(\epsilon)^c).
	\end{equation}
	\end{prop}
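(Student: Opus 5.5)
The inclusion of events will follow directly from the deterministic bracketing in Lemma~\ref{lem:vc_bracket_m}: I will show that on $\mathcal{R}(\varepsilon)$, with $\varepsilon=\varepsilon_n\le\min\{u_0,m,1-m\}$, the supremum inside the probability is at most $\kappa\varepsilon$. The event in \eqref{epsn} is therefore contained in $\mathcal{R}(\varepsilon)^c$, and monotonicity of $\mathbb{P}$ gives the claim.

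First, fix $i\in[n+1]$ and $y$. Since $\varepsilon\le m$ we have $[m-\varepsilon]_+=m-\varepsilon$, and since $\varepsilon\le 1-m$ we have $\min\{m+\varepsilon,1\}=m+\varepsilon$. Inequality \eqref{eq:bracket_m} then reads
\[
\delta_{P,m-\varepsilon}(y)\le\delta_{P^{\mathcal B_i},m}(y)\le\delta_{P,m+\varepsilon}(y).
\]
By monotonicity of $u\mapsto\delta_{P,u}(y)$, the value $\delta_{P,m}(y)$ lies in the same interval. Hence
\[
\bigl|\delta_{P^{\mathcal B_i},m}(y)-\delta_{P,m}(y)\bigr|\le \delta_{P,m+\varepsilon}(y)-\delta_{P,m-\varepsilon}(y)\le\kappa\varepsilon,
\]
where the last step applies \eqref{eq:margin_m} with $u=\varepsilon\le u_0$. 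This bound is uniform in $i$ and $y$, so the supremum is at most $\kappa\varepsilon$ on $\mathcal R(\varepsilon)$.

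The main obstacle is that Lemma~\ref{lem:vc_bracket_m} is stated only for $y\in\mathrm{supp}(X)$, while the supremum here runs over all $y\in\R^d$. I will check that the lemma's proof only uses the support assumption in the case $m<\varepsilon$, where $[m-\varepsilon]_+=0$. That case is excluded here because $\varepsilon\le m$. In the case $m\ge\varepsilon$, both the upper bound through $r_+$ and the lower bound through $r_-$ hold for arbitrary $y$, so the bracketing extends to all $y\in\R^d$. (An alternative would be to use the 1-Lipschitz property, but it is unnecessary.) A minor point to handle is the boundary case $m+\varepsilon=1$: the lemma's upper bracket remains valid there, and \eqref{eq:margin_m} is assumed for all $u\le u_0$, so nothing more is needed.
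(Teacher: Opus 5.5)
Your proposal is correct and follows the paper's argument. On $\mathcal{R}(\varepsilon)$ the bracketing of Lemma~\ref{lem:vc_bracket_m} reduces, since $\varepsilon\le\min\{m,1-m\}$, to $\delta_{P,m-\varepsilon}\le\delta_{P^{\mathcal B_i},m}\le\delta_{P,m+\varepsilon}$; monotonicity in the mass parameter places $\delta_{P,m}$ in the same interval, and the margin condition with $u=\varepsilon$ gives the uniform bound $\kappa\varepsilon$. Your direct event inclusion is a cleaner substitute for the paper's conditioning on $\mathcal{R}(\varepsilon)$. Your check that the lemma's support hypothesis is needed only when $m<\varepsilon$ also justifies a step the paper takes without comment, namely applying the lemma to all $y\in\R^d$.
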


	\begin{proof} 
			\begin{multline}  
			\mathbb{P}\Bigg(\sup_{1\leq i\le n+1} \sup_{y\in\R^d} \bigl|\delta_{P^{\mathcal{B}_i},  m}(y)-\delta_{P,  m}(y)\bigr|
			>  \kappa  \varepsilon\Bigg)\leq \\
			\mathbb{P}\Bigg(\sup_{1\leq i\le n+1} \sup_{y\in\R^d} \bigl|\delta_{P^{\mathcal{B}_i},  m}(y)-\delta_{P,  m}(y)\bigr|
			>  \kappa  \varepsilon\Bigg|\mathcal{R}(\epsilon)\Bigg)+\mathbb{P}(\mathcal{R}(\epsilon)^c)=A+B
		\end{multline}
		
 Let us prove that $A=0$. 	On $\mathcal R(\varepsilon)$, Lemma~\ref{lem:vc_bracket_m}  yields, for every $1\leq i\le n{+}1$ and $y\in\R^d$,
		\begin{equation}\label{eq:basic-bracket}
			\delta_{P,  [m-\varepsilon]_+}(y)\ \le\ \delta_{P^{\mathcal B_i},  m}(y)\ \le\ \delta_{P,  \min\{m+\varepsilon,  1\}}(y).
		\end{equation}
		Since  	$\varepsilon\le \min\{u_0,  m,  1-m\}$; then \eqref{eq:basic-bracket} simplifies to
		\begin{equation}\label{eq:clean-bracket}
			\delta_{P,  m-\varepsilon}(y)\ \le\ \delta_{P^{\mathcal B_i},  m}(y)\ \le\ \delta_{P,  m+\varepsilon}(y).
		\end{equation}

		For each $y$, the map $t\mapsto \delta_{P,t}(y)$ is non-decreasing in $t$. Hence from \eqref{eq:clean-bracket} we also have
$\delta_{P,  m-\varepsilon}(y)\ \le\ \delta_{P,  m}(y)\ \le\ \delta_{P,  m+\varepsilon}(y).$  Combining with \eqref{eq:clean-bracket} gives, for every $i,y$,
	\begin{multline*}
		\bigl|\delta_{P^{\mathcal B_i},  m}(y)-\delta_{P,  m}(y)\bigr|
		\ \le\ \max  \Big\{\delta_{P,  m+\varepsilon}(y)-\delta_{P,  m}(y),\ \delta_{P,  m}(y)-\delta_{P,  m-\varepsilon}(y)\Big\}
		\\ \le\ \delta_{P,  m+\varepsilon}(y)-\delta_{P,  m-\varepsilon}(y).
		\end{multline*}
	By the margin condition \eqref{eq:margin_m} with $u=\varepsilon_n\le u_0$ we have, for all $y$,
	\[
	\delta_{P,m+\varepsilon}(y)-\delta_{P,m-\varepsilon}(y)\le \kappa\varepsilon.
	\]
	Taking the supremum over $y$ and $i$ it follows that $A=0$.\end{proof}

	Proposition~\ref{cor:margin_m} isolates a regularity condition on the population functional in the \emph{mass parameter} $m$. Under this ``margin'' assumption, the bracketing in Lemma~\ref{lem:vc_bracket_m} translates into a uniform sup-norm convergence rate for $\delta_{P^{\mathcal B_i},m}$ to $\delta_{P,m}$, simultaneously over all conformal bags.

	\begin{figure}[h]
	\centering
	\includegraphics[width=0.6\textwidth]{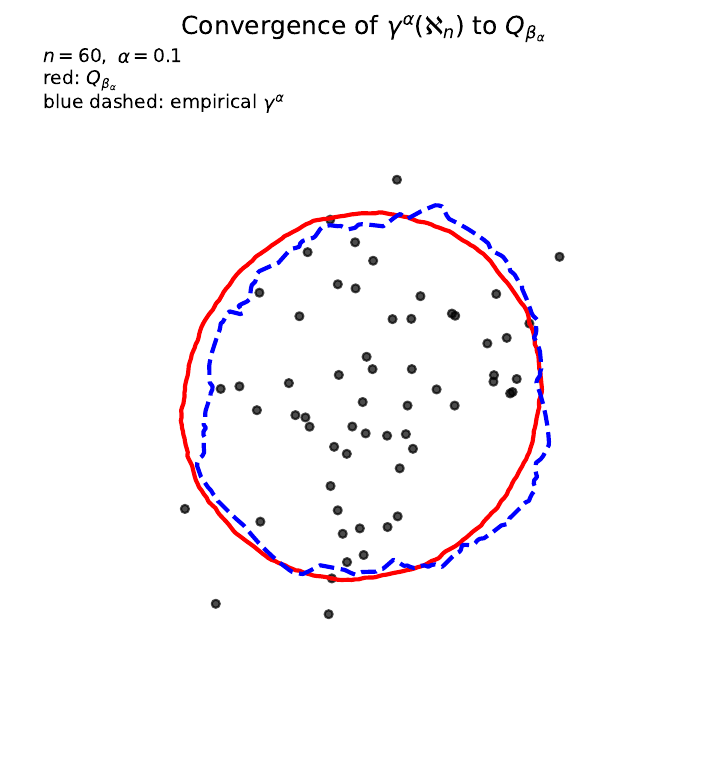}
	\caption{Geometric convergence of the conformal prediction region
		$\gamma^\alpha(\aleph_n)$ to the population robust central set
		$Q_{\beta_\alpha}$.
		The red solid contour represents the population level set
		$\{x:\delta_P(x)=\beta_\alpha\}$, while the blue dashed contour corresponds
		to the empirical conformal region constructed from the sample. The plot is obtained by a spatial grid trick to evaluate $\delta_{P_n}$ and decide whether the point is inside or outside of the sets. This has been implemented for a 2 dimensional gaussian vector.
		The figure illustrates the consistency proved
		in Section 4}
	\label{fig:convergence-gamma-Q}
\end{figure}

\begin{thm}\label{thm:consistency-corrected-omega}
	Assume Proposition~\ref{cor:margin_m} with \(m=1/2\), and let \(X\sim P\) be independent of \(\aleph_n\). For \(\varepsilon>0\), define
	\[
	\omega(\varepsilon):=\mathbb P \bigl(\beta_\alpha-\varepsilon<\delta_P(X)\le \beta_\alpha+\varepsilon\bigr).
	\]
	Fix \(0<\varepsilon\) such that \(\varepsilon/(4\kappa)\le \min\{u_0,1/2\}\), and define
	\[
	p^-_\varepsilon:=\mathbb P \bigl(\delta_P(X)>\beta_\alpha-\varepsilon/2\bigr),\qquad
	p^+_\varepsilon:=\mathbb P \bigl(\delta_P(X)>\beta_\alpha+\varepsilon/2\bigr).
	\]
	Then
	\begin{align}
		\mathbb P\bigl(X\in \gamma^\alpha(\aleph_n)\triangle Q_{\beta_\alpha}\bigr)
		\le\;&
		\mathbb P \bigl(\mathrm{Bin}(n+1,p^-_\varepsilon)<\alpha(n+1)\bigr)
		+\mathbb P \bigl(\mathrm{Bin}(n+1,p^+_\varepsilon)>\alpha(n+1)\bigr)\nonumber\\
		&+2\,\mathbb P \bigl(\mathcal R(\varepsilon/(4\kappa))^c\bigr)+\omega(\varepsilon).
		\label{eq:consistency-corrected-main-omega}
	\end{align}
	Moreover, \(p^-_\varepsilon>\alpha\) and \(p^+_\varepsilon\le \alpha\). Hence
	\begin{align}
		\mathbb P\bigl(X\in \gamma^\alpha(\aleph_n)\triangle Q_{\beta_\alpha}\bigr)
		\le\;&
		\exp \Bigl(-\frac{(n+1)(p^-_\varepsilon-\alpha)^2}{2p^-_\varepsilon}\Bigr)
		+\mathbb P \bigl(\mathrm{Bin}(n+1,p^+_\varepsilon)>\alpha(n+1)\bigr)\nonumber\\
		&+2\,\mathbb P \bigl(\mathcal R(\varepsilon/(4\kappa))^c\bigr)+\omega(\varepsilon).
		\label{eq:consistency-corrected-main-2-omega}
	\end{align}
	If, in addition, \(p^+_\varepsilon<\alpha\), then
	\begin{align}
		\mathbb P\bigl(X\in \gamma^\alpha(\aleph_n)\triangle Q_{\beta_\alpha}\bigr)
		\le\;&
		\exp \Bigl(-\frac{(n+1)(p^-_\varepsilon-\alpha)^2}{2p^-_\varepsilon}\Bigr)
		+\exp \bigl(-2(n+1)(\alpha-p^+_\varepsilon)^2\bigr)\nonumber\\
		&+2\,\mathbb P \bigl(\mathcal R(\varepsilon/(4\kappa))^c\bigr)+\omega(\varepsilon).
		\label{eq:consistency-corrected-exp-omega}
	\end{align}
\end{thm}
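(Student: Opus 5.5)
Set $\eta:=\varepsilon/(4\kappa)$ and work on the event $\mathcal R(\eta)$. By Proposition~\ref{cor:margin_m} with $m=1/2$, on this event we have
\[
\sup_{i\le n+1}\sup_y|\delta_{P^{\mathcal B_i}}(y)-\delta_P(y)|\le \kappa\eta=\varepsilon/4 .
\]
This bound is uniform in $z$ as well. The bags $\mathcal B_i(z)$ depend on $z$, but every bag is $n$ points drawn from $\aleph_n\cup\{z\}$, so I would apply the VC bound to the $n+1$ points $\aleph_n\cup\{X\}$ with $X$ the test point. The factor $2$ in front of $\mathbb P(\mathcal R(\eta)^c)$ suggests applying this once in each direction of the symmetric difference; I would simply bound each half separately and add.

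Write $X=X_{n+1}$. Then $R_{n+1}=\delta_{P^{\aleph_n}}(X)$ and $R_i=\delta_{P^{\mathcal B_i}}(X_i)$, so on $\mathcal R(\eta)$ each score satisfies $|R_i-\delta_P(X_i)|\le\varepsilon/4$. Split the symmetric difference into two pieces.

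\emph{First piece: $X\in Q_{\beta_\alpha}\setminus\gamma^\alpha(\aleph_n)$.} Here $\delta_P(X)\le\beta_\alpha$ and $p_X\le\alpha$. Intersect with $\{\delta_P(X)\le\beta_\alpha-\varepsilon\}$; the complementary event is absorbed by $\omega(\varepsilon)$. On this intersection $R_{n+1}\le\beta_\alpha-3\varepsilon/4$. Every $i$ with $\delta_P(X_i)>\beta_\alpha-\varepsilon/2$ then has $R_i>\beta_\alpha-3\varepsilon/4\ge R_{n+1}$, so
\[
(n+1)p_X\ge N^-:=|\{i\le n+1:\delta_P(X_i)>\beta_\alpha-\varepsilon/2\}|.
\]
Since the $X_i$, $i\le n+1$, are i.i.d., $N^-\sim\mathrm{Bin}(n+1,p^-_\varepsilon)$. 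Thus $p_X\le\alpha$ forces $N^-\le\alpha(n+1)$, which gives the first binomial term. There is a boundary issue: the statement has a strict inequality $<$ while this argument gives $\le$. I would try to absorb the discrepancy by a strict-inequality adjustment at the threshold $\beta_\alpha-\varepsilon/2$.

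\emph{Second piece: $X\in\gamma^\alpha(\aleph_n)\setminus Q_{\beta_\alpha}$.} Again modulo $\omega(\varepsilon)$, we may assume $\delta_P(X)>\beta_\alpha+\varepsilon$, so $R_{n+1}>\beta_\alpha+3\varepsilon/4$. Then $R_i\ge R_{n+1}$ forces $\delta_P(X_i)>\beta_\alpha+\varepsilon/2$. Hence
\[
(n+1)p_X\le N^+\sim\mathrm{Bin}(n+1,p^+_\varepsilon),
\]
and $p_X>\alpha$ forces $N^+>\alpha(n+1)$. The term $\omega(\varepsilon)$ appears only once in the statement, and this works because the two excluded events, $\beta_\alpha-\varepsilon<\delta_P(X)\le\beta_\alpha$ and $\beta_\alpha<\delta_P(X)\le\beta_\alpha+\varepsilon$, are disjoint pieces of the $\omega$ window.

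For the inequalities on $p^\pm_\varepsilon$, use the definition of $\beta_\alpha$ as an infimum. Since $\beta_\alpha-\varepsilon/2<\beta_\alpha$, we get $P(Q_{\beta_\alpha-\varepsilon/2})<1-\alpha$, hence $p^-_\varepsilon>\alpha$. By right-continuity of $\beta\mapsto P(Q_\beta)$, we get $P(Q_{\beta_\alpha})\ge1-\alpha$, and monotonicity then gives $p^+_\varepsilon\le\alpha$. The exponential bounds follow from standard Chernoff inequalities: the multiplicative lower-tail bound $\mathbb P(\mathrm{Bin}<(1-t)\mu)\le e^{-t^2\mu/2}$ with $\mu=(n+1)p^-_\varepsilon$ and $t=(p^-_\varepsilon-\alpha)/p^-_\varepsilon$, and Hoeffding for the upper tail when $p^+_\varepsilon<\alpha$.

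The main obstacle is the uniformity step. The bags depend on the random test point $X$, and the scores $R_i$ are evaluated at the sample points themselves. So I must check that $\mathcal R(\eta)$, read through the VC bound on $\aleph_n\cup\{X\}$, really controls all $\mathcal B_i$ simultaneously, and that the $\pm1/n$ effect of removing a point is absorbed into $\eta$. The $\le$ versus $<$ at the binomial threshold is a secondary point to clean up.
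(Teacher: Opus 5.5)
Your proof is the paper's proof in all essentials. It uses the same split of the symmetric difference with the window $\omega(\varepsilon)$; charging $\{\beta_\alpha-\varepsilon<\delta_P(X)\le\beta_\alpha\}$ and $\{\beta_\alpha<\delta_P(X)\le\beta_\alpha+\varepsilon\}$ to the two sides separately, as you do, is equivalent to the paper's three-set inclusion. It then uses the same $\varepsilon/4$ sandwich on $\mathcal R(\varepsilon/(4\kappa))$, whose failure probability is paid once per side (hence the factor $2$). The inclusions between $\{i:R_i\ge R_{n+1}\}$ and the two binomial index sets are the same, the argument for $p^\pm_\varepsilon$ via the definition of $\beta_\alpha$ is equivalent, and the Chernoff/Hoeffding finish is identical. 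There are two remarks.

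\emph{The step you call the main obstacle is not part of this theorem.} The bound is stated in terms of $\mathbb P(\mathcal R(\varepsilon/(4\kappa))^c)$, and $\mathcal R$ is defined directly through the bags $\mathcal B_i=\mathcal B_i(X)$ built with $z=X$. All that is used is a deterministic fact: on $\mathcal R(\eta)$ one has $\sup_{i}\sup_y|\delta_{P^{\mathcal B_i}}(y)-\delta_P(y)|\le\kappa\eta$, which is exactly what Lemma~\ref{lem:vc_bracket_m} and the proof of Proposition~\ref{cor:margin_m} give. No VC bound, no uniformity in $z$ and no $\pm1/n$ correction enters here. Those matter only for bounding $\mathbb P(\mathcal R^c)$ itself, which is Proposition~\ref{prop:uniform_hp_m}.

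\emph{The $\le$ versus $<$ issue is the one genuine loose end, and the repair you suggest would not work.} Moving the threshold $\beta_\alpha-\varepsilon/2$ only changes which binomial you compare with. The discrepancy is precisely the atom $\{N^-=\alpha(n+1)\}$ of the binomial fixed by $p^-_\varepsilon$ in the statement, and no threshold change removes it.

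The right fix is to count the test index. The index $n+1$ always belongs to $\{i:R_i\ge R_{n+1}\}$. On $\{\delta_P(X)\le\beta_\alpha-\varepsilon\}$ it is \emph{not} among the indices with $\delta_P(X_i)>\beta_\alpha-\varepsilon/2$. Hence $(n+1)p_X\ge N^-+1$, and $p_X\le\alpha$ gives $N^-\le\alpha(n+1)-1<\alpha(n+1)$.

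The paper does not handle this point either. It simply asserts $X\notin\gamma^\alpha(\aleph_n)\Longleftrightarrow|C(X)|<\alpha(n+1)$, which fails when $|C(X)|=\alpha(n+1)$: then $p_X=\alpha$ and $X\notin\gamma^\alpha(\aleph_n)$. So your suspicion was justified, and the self-index observation is what actually makes the strict inequality in the first bound correct. The exponential bounds are unaffected either way, since the multiplicative Chernoff lower-tail bound also holds for $\{\mathrm{Bin}\le(1-t)\mu\}$.
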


\begin{proof}
	Write \(A:=\gamma^\alpha(\aleph_n)\), \(B:=Q_{\beta_\alpha}\), and \(Z:=\delta_P(X)\). Since
	\[
	Q_{\beta_\alpha-\varepsilon}\subset Q_{\beta_\alpha}\subset Q_{\beta_\alpha+\varepsilon},
	\]
	we have
	\[
	A\triangle B\subset (A\cap Q_{\beta_\alpha+\varepsilon}^c)\cup(Q_{\beta_\alpha-\varepsilon}\cap A^c)\cup(Q_{\beta_\alpha+\varepsilon}\setminus Q_{\beta_\alpha-\varepsilon}),
	\]
	and therefore
	\begin{align}
		\mathbb P(X\in A\triangle B)
		\le\;&
		\mathbb P(X\in A,\ Z>\beta_\alpha+\varepsilon)
		+\mathbb P(Z\le \beta_\alpha-\varepsilon,\ X\notin A)\nonumber\\
		&+\mathbb P(\beta_\alpha-\varepsilon<Z\le \beta_\alpha+\varepsilon).
		\label{eq:split-proof-omega}
	\end{align}
	By definition, the last term is exactly \(\omega(\varepsilon)\).
	
	Now let
	\[
	G_\varepsilon:=\Bigl\{\sup_{1\le i\le n+1}\sup_{y\in\mathbb R^d}\bigl|\delta_{P^{\mathcal B_i}}(y)-\delta_P(y)\bigr|\le \varepsilon/4\Bigr\}.
	\]
	Applying Proposition~\ref{cor:margin_m} with \(m=1/2\) and \(\eta=\varepsilon/(4\kappa)\), we get
	\[
	\mathbb P(G_\varepsilon^c)\le \mathbb P \bigl(\mathcal R(\varepsilon/(4\kappa))^c\bigr).
	\]
	
	Define
	\[
	C(X):=\{i\in\{1,\dots,n+1\}:\delta_{P^{\mathcal B_i}}(X_i)\ge \delta_{P^{\aleph_n}}(X)\}.
	\]
	By definition of the conformal region,
	\[
	X\notin \gamma^\alpha(\aleph_n)\Longleftrightarrow |C(X)|<\alpha(n+1),\qquad
	X\in \gamma^\alpha(\aleph_n)\Longleftrightarrow |C(X)|>\alpha(n+1).
	\]
	
	On \(\{Z\le \beta_\alpha-\varepsilon\}\cap G_\varepsilon\), if \(\delta_P(X_i)>\beta_\alpha-\varepsilon/2\), then
	\[
	\delta_{P^{\mathcal B_i}}(X_i)\ge \delta_P(X_i)-\varepsilon/4>\beta_\alpha-3\varepsilon/4,
	\qquad
	\delta_{P^{\aleph_n}}(X)\le \delta_P(X)+\varepsilon/4\le \beta_\alpha-3\varepsilon/4.
	\]
	Hence \(\delta_{P^{\mathcal B_i}}(X_i)\ge \delta_{P^{\aleph_n}}(X)\), so
	\[
	\{i:\delta_P(X_i)>\beta_\alpha-\varepsilon/2\}\subset C(X).
	\]
	Thus, on \(\{Z\le \beta_\alpha-\varepsilon\}\cap G_\varepsilon\),
	\[
	|C(X)|<\alpha(n+1)\Longrightarrow \bigl|\{i:\delta_P(X_i)>\beta_\alpha-\varepsilon/2\}\bigr|<\alpha(n+1),
	\]
	and therefore
	\begin{align}
		\mathbb P(Z\le \beta_\alpha-\varepsilon,\ X\notin A)
		\le\;&
		\mathbb P\Bigl(\bigl|\{i:\delta_P(X_i)>\beta_\alpha-\varepsilon/2\}\bigr|<\alpha(n+1)\Bigr)\nonumber\\
		&+\mathbb P \bigl(\mathcal R(\varepsilon/(4\kappa))^c\bigr).
		\label{eq:inner-bound-omega}
	\end{align}
	
	Similarly, on \(\{Z>\beta_\alpha+\varepsilon\}\cap G_\varepsilon\), if \(i\in C(X)\), then
	\[
	\delta_{P^{\mathcal B_i}}(X_i)\ge \delta_{P^{\aleph_n}}(X),
	\]
	so
	\[
	\delta_P(X_i)\ge \delta_{P^{\mathcal B_i}}(X_i)-\varepsilon/4\ge \delta_{P^{\aleph_n}}(X)-\varepsilon/4\ge \delta_P(X)-\varepsilon/2>\beta_\alpha+\varepsilon/2.
	\]
	Hence
	\[
	C(X)\subset \{i:\delta_P(X_i)>\beta_\alpha+\varepsilon/2\},
	\]
	and thus, on \(\{Z>\beta_\alpha+\varepsilon\}\cap G_\varepsilon\),
	\[
	|C(X)|>\alpha(n+1)\Longrightarrow \bigl|\{i:\delta_P(X_i)>\beta_\alpha+\varepsilon/2\}\bigr|>\alpha(n+1).
	\]
	Therefore
	\begin{align}
		\mathbb P(X\in A,\ Z>\beta_\alpha+\varepsilon)
		\le\;&
		\mathbb P\Bigl(\bigl|\{i:\delta_P(X_i)>\beta_\alpha+\varepsilon/2\}\bigr|>\alpha(n+1)\Bigr)\nonumber\\
		&+\mathbb P \bigl(\mathcal R(\varepsilon/(4\kappa))^c\bigr).
		\label{eq:outer-bound-omega}
	\end{align}
	
	Since \(X_1,\dots,X_n,X\) are i.i.d.\ with law \(P\),
	\[
	\bigl|\{i:\delta_P(X_i)>\beta_\alpha-\varepsilon/2\}\bigr|\sim \mathrm{Bin}(n+1,p^-_\varepsilon),\qquad
	\bigl|\{i:\delta_P(X_i)>\beta_\alpha+\varepsilon/2\}\bigr|\sim \mathrm{Bin}(n+1,p^+_\varepsilon).
	\]
	Combining \eqref{eq:split-proof-omega}, \eqref{eq:inner-bound-omega}, and \eqref{eq:outer-bound-omega}, we obtain \eqref{eq:consistency-corrected-main-omega}.
	
	Now, since \(\beta_\alpha=\inf\{\beta:P(Q_\beta)\ge 1-\alpha\}\), for every \(t<\beta_\alpha\) one has \(P(Q_t)<1-\alpha\). Taking \(t=\beta_\alpha-\varepsilon/2\),
	\[
	\mathbb P(\delta_P(X)\le \beta_\alpha-\varepsilon/2)<1-\alpha,
	\]
	hence
	\[
	p^-_\varepsilon=\mathbb P(\delta_P(X)>\beta_\alpha-\varepsilon/2)>\alpha.
	\]
	Likewise, for every \(t>\beta_\alpha\), \(P(Q_t)\ge 1-\alpha\); taking \(t=\beta_\alpha+\varepsilon/2\), we get
	\[
	\mathbb P(\delta_P(X)\le \beta_\alpha+\varepsilon/2)\ge 1-\alpha,
	\]
	so
	\[
	p^+_\varepsilon=\mathbb P(\delta_P(X)>\beta_\alpha+\varepsilon/2)\le \alpha.
	\]
	
	Since \(p^-_\varepsilon>\alpha\), the multiplicative Chernoff lower-tail bound yields
	\[
	\mathbb P \bigl(\mathrm{Bin}(n+1,p^-_\varepsilon)<\alpha(n+1)\bigr)
	\le
	\exp \Bigl(-\frac{(n+1)(p^-_\varepsilon-\alpha)^2}{2p^-_\varepsilon}\Bigr),
	\]
	which gives \eqref{eq:consistency-corrected-main-2-omega}. If in addition \(p^+_\varepsilon<\alpha\), Hoeffding's inequality gives
	\[
	\mathbb P \bigl(\mathrm{Bin}(n+1,p^+_\varepsilon)>\alpha(n+1)\bigr)
	\le
	\exp \bigl(-2(n+1)(\alpha-p^+_\varepsilon)^2\bigr),
	\]
	and \eqref{eq:consistency-corrected-exp-omega} follows.
\end{proof}

	Theorem~\ref{thm:consistency-corrected-omega} shows that the conformal region converges (in the sense of vanishing symmetric difference probability) to the population level set of $\delta_P$ with mass $1-\alpha$. In particular, the conformal region asymptotically targets a robust geometric central set, rather than an object tied to a regression model or to residuals.

	\section{Consistency of $\hat{Q}$}

	\begin{assumption} \label{ass:level-regularity} We say that assumption \eqref{ass:level-regularity} holds if
	  $\beta_\alpha$ is a continuity point of the map $\beta \ \longmapsto\ Q_\beta$, 		with respect to the Hausdorff metric. That is, for every $\varepsilon>0$ there exists
		$\eta>0$ such that
		\[
		d_H\bigl(Q_\beta,Q_{\beta_\alpha}\bigr)\ <\ \varepsilon
		\qquad\text{whenever }|\beta-\beta_\alpha|<\eta.
		\]
	\end{assumption}

	Assumption~\ref{ass:level-regularity} rules out pathological ``jump'' behaviour of the level set at $\beta_\alpha$. It is a standard regularity hypothesis in level-set estimation: small perturbations of the threshold should not cause macroscopic changes in the set under the Hausdorff metric.

	\begin{prop}\label{prop:Qhat-H-conv}
		Under the hypotheses of Proposition~\ref{cor:margin_m} and
		Assumption~\ref{ass:level-regularity},
		\[
		d_H\bigl(\hat Q_{\beta_\alpha},Q_{\beta_\alpha}\bigr)\ \longrightarrow\ 0
		\quad a.s. \text{ as }n\to\infty.
		\]
	\end{prop}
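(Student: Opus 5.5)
Our plan is to sandwich $\hat Q_{\beta_\alpha}$ between two population level sets whose levels converge to $\beta_\alpha$, and then let Assumption~\ref{ass:level-regularity} turn this into Hausdorff convergence. Take $\varepsilon_n=K\sqrt{\log n/n}$ as in Remark~\ref{BC}. Since $\varepsilon_n\to0$, we have $\varepsilon_n\le\min\{u_0,1/2\}$ for large $n$. By Proposition~\ref{prop:uniform_hp_m}, $\sum_n\mathbb P(\mathcal R(\varepsilon_n)^c)<\infty$ once $K$ is large, so Borel--Cantelli shows that almost surely $\mathcal R(\varepsilon_n)$ holds eventually. The empirical measure $P^{\aleph_n}$ is $P^{\mathcal B_{n+1}}$, so the event already controls it. On $\mathcal R(\varepsilon_n)$, Proposition~\ref{cor:margin_m} (its deterministic part, i.e.\ the fact that $A=0$ in its proof) with $m=1/2$ gives
\[
\sup_{y\in\R^d}\bigl|\delta_{P^{\aleph_n}}(y)-\delta_P(y)\bigr|\le \kappa\varepsilon_n=:t_n .
\]
One point needs care: Lemma~\ref{lem:vc_bracket_m} is stated for $y\in\mathrm{supp}(X)$. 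The lower bracket uses the support only when $m<\varepsilon$, which cannot happen here because $\varepsilon_n\le 1/2=m$. The upper bracket never uses it. So the sandwich holds for every $y\in\R^d$, and the sup over $\R^d$ is legitimate.

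From the uniform bound we get the deterministic inclusions
\[
Q_{\beta_\alpha-t_n}\subset \hat Q_{\beta_\alpha}\subset Q_{\beta_\alpha+t_n}.
\]
Indeed, if $\delta_P(z)\le\beta_\alpha-t_n$ then $\delta_{P^{\aleph_n}}(z)\le\beta_\alpha$. Conversely, if $\delta_{P^{\aleph_n}}(z)\le\beta_\alpha$ then $\delta_P(z)\le\beta_\alpha+t_n$. We need $\beta_\alpha-t_n>0$ for the lower set to make sense, which holds for large $n$ whenever $\beta_\alpha>0$. If $\beta_\alpha=0$, we use the convention $Q_\beta=\emptyset$ or $\{\delta_P=0\}$ and treat it separately; this case seems degenerate.

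Next we turn the inclusions into Hausdorff bounds, using the nestedness $Q_{\beta_\alpha-t_n}\subset Q_{\beta_\alpha}\subset Q_{\beta_\alpha+t_n}$. Every point of $\hat Q_{\beta_\alpha}$ lies in $Q_{\beta_\alpha+t_n}$, so it is within $d_H(Q_{\beta_\alpha+t_n},Q_{\beta_\alpha})$ of $Q_{\beta_\alpha}$. Every point of $Q_{\beta_\alpha}$ is within $d_H(Q_{\beta_\alpha-t_n},Q_{\beta_\alpha})$ of $Q_{\beta_\alpha-t_n}\subset\hat Q_{\beta_\alpha}$. Hence
\[
d_H(\hat Q_{\beta_\alpha},Q_{\beta_\alpha})\le \max\bigl\{d_H(Q_{\beta_\alpha+t_n},Q_{\beta_\alpha}),\,d_H(Q_{\beta_\alpha-t_n},Q_{\beta_\alpha})\bigr\}.
\]
Given $\varepsilon>0$, choose $\eta$ from Assumption~\ref{ass:level-regularity}. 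For all large $n$ we then have $t_n<\eta$, and both terms are below $\varepsilon$. This holds on the almost-sure event where $\mathcal R(\varepsilon_n)$ eventually occurs, which proves the claim.

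The main obstacle is the first step: making the almost-sure uniform control rigorous. This requires checking the summability in Proposition~\ref{prop:uniform_hp_m} with $\varepsilon_n=K\sqrt{\log n/n}$, which needs $C_2K^2>2V+2$. It also requires extending the bracket beyond $\mathrm{supp}(X)$, which we handled through $\varepsilon_n\le m$. A secondary subtlety is that Hausdorff distance involving an empty set or unbounded sets could be infinite. We will assume, as is implicit in Assumption~\ref{ass:level-regularity}, that the level sets near $\beta_\alpha$ are nonempty and compact. Boundedness follows since $\delta_P(z)\ge\|z\|-R$, where $R$ is chosen so that $P(B(0,R))>1/2$.
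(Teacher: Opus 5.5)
Your proposal is correct and follows the paper's proof essentially step for step. You get almost-sure uniform control $\sup_{y}|\delta_{P^{\aleph_n}}(y)-\delta_P(y)|\le\kappa\varepsilon_n$ from Proposition~\ref{cor:margin_m} with Borel--Cantelli, then the sandwich $Q_{\beta_\alpha-\kappa\varepsilon_n}\subset\hat Q_{\beta_\alpha}\subset Q_{\beta_\alpha+\kappa\varepsilon_n}$, and then apply Assumption~\ref{ass:level-regularity}; along the way you fill in details the paper leaves implicit, namely the explicit bound $d_H(\hat Q_{\beta_\alpha},Q_{\beta_\alpha})\le\max\{d_H(Q_{\beta_\alpha+\kappa\varepsilon_n},Q_{\beta_\alpha}),\,d_H(Q_{\beta_\alpha-\kappa\varepsilon_n},Q_{\beta_\alpha})\}$ where the paper just asserts $2\varepsilon$, and the observation that the $\mathrm{supp}(X)$ restriction in Lemma~\ref{lem:vc_bracket_m} is harmless once $\varepsilon_n\le 1/2$.
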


	\begin{proof}
		We apply Proposition~\ref{cor:margin_m} with $m=1/2$. 
		By \eqref{epsn} combined with Remark \ref{BC}, there exists an almost-sure event on which, for all $n$ large enough,
		\[
		\sup_{x\in\R^d}\bigl| \delta_{P^{\aleph_n}}(x)-\delta_P(x)\bigr|
		\;\le\; \kappa\epsilon_n,
		\qquad
		\kappa\epsilon_n 
		\]
		where  $\epsilon_n=O(\sqrt{\log(n)/n})$. Fix such an $n$. The sup-norm bound implies the following inclusion bracketing for the level sets:
		\begin{equation}\label{eq:Qhat-bracket}
			Q_{\beta_\alpha-\kappa\epsilon_n} \;\subset\; \hat Q_{\beta_\alpha} \;\subset\;
			Q_{\beta_\alpha+\kappa\epsilon_n}.
		\end{equation}
		To see this, observe that if $x\in Q_{\beta_\alpha-\kappa\epsilon_n}$, then $\delta_P(x)\le \beta_\alpha-\kappa\epsilon_n$, implying $ \delta_{P^{\aleph_n}}(x) \le \delta_P(x)+\kappa\epsilon_n \le \beta_\alpha$, so $x\in \hat Q_{\beta_\alpha}$. Conversely, if $x\in \hat Q_{\beta_\alpha}$, then $ \delta_{P^{\aleph_n}}(x)\le \beta_\alpha$, implying $\delta_P(x) \le  \delta_{P^{\aleph_n}}(x)+\kappa\epsilon_n \le \beta_\alpha+\kappa\epsilon_n$, so $x\in Q_{\beta_\alpha+\kappa\epsilon_n}$.

		Now fix $\varepsilon>0$. By Assumption~\ref{ass:level-regularity}, there exists $\eta>0$ such that $|\beta-\beta_\alpha|<\eta$ implies $d_H(Q_\beta,Q_{\beta_\alpha}) < \varepsilon$.
		Take $n$ large enough so that $\kappa\epsilon_n<\eta$.
		Then both $d_H(Q_{\beta_\alpha-\kappa\epsilon_n},Q_{\beta_\alpha}) < \varepsilon$ and $d_H(Q_{\beta_\alpha+\kappa\epsilon_n},Q_{\beta_\alpha}) < \varepsilon$. 
		Thus $d_H(\hat Q_{\beta_\alpha},Q_{\beta_\alpha}) < 2\varepsilon$ for all sufficiently large $n$. Since $\varepsilon$ was arbitrary, the result follows.
	\end{proof}

	Proposition~\ref{prop:Qhat-H-conv} provides a geometric convergence guarantee for the empirical level set itself: $\hat Q_{\beta_\alpha}$ converges to $Q_{\beta_\alpha}$ in Hausdorff distance. Together with the earlier consistency of the conformal set, this situates conformal prediction as a tool that not only achieves coverage but also recovers a stable geometric object associated to $P$.

	\section{Computational aspects}

	We keep the notation introduced before. Given $(\mathcal X,d)$ a metric space, denote
	$\aleph_n=\{x_1,\dots,x_n\}\subset\mathcal X$.
	Let $P^{\aleph_n}:=\frac1n\sum_{i=1}^n \delta_{x_i}$ be the associated empirical measure.
	For $z\in\mathcal X$, we recall the half-mass radius function (for $m=1/2$) defined as
	\[
	\delta_{P^{\aleph_n}}(z)
	\;:=\;
	\inf\bigl\{ r>0:\ P^{\aleph_n}\bigl(B(z,r)\bigr)\ >\ \tfrac12 \bigr\},
	\]
	where $B(z,r)=\{y\in\mathcal X:\ d(y,z)\le r\}$.
Recall that, $\mathbf{k}=\lfloor n/2\rfloor + 1$ and, for any $z\in\mathcal X$, let
	\[
	r_\mathbf{k}(z)\ :=\ \text{the $\mathbf{k}$-th order statistic of }\bigl(d(z,x_1),\dots,d(z,x_n)\bigr),
	\]
	i.e.\ the distance from $z$ to its $\mathbf{k}$-NN sample point (counting multiplicities and allowing ties).
	For $\beta\ge 0$, define
	\[
	\hat{Q}_{\beta}\ :=\ \bigl\{ z\in\mathcal X:\ \delta_{P^{\aleph_n}}(z)\ \le\ \beta \bigr\}.
	\]

	\paragraph{Computational viewpoint.}
	The definition of $\delta_{P^{\aleph_n}}$ depends only on the ranks of distances to the sample points, and thus it can be computed from pairwise distances. The representations below express $\hat Q_\beta$ as a union/intersection of balls, which is particularly useful both for geometric intuition and for designing practical approximations.

	The following proposition is a rewriting of the set $\hat{Q}_{\beta}$ as a union of balls.

	\begin{prop}
		For every $\beta\ge 0$,
		\[
		\hat{Q}_{\beta} = \bigl\{z:\ r_\mathbf{k}(z)\le \beta\bigr\}
		 = \Bigl\{ z:\ \#\{i:\ d(z,x_i)\le \beta\}\ \ge\ \mathbf{k} \Bigr\}
		= \bigcup_{\substack{I\subset [n]\\ |I|\ge \mathbf{k}}}\ \bigcap_{i\in I} B(x_i,\beta).
		\]
	\end{prop}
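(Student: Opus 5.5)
We prove the three equalities in turn, working in a general metric space $(\mathcal X,d)$ with $B(z,r)$ the closed ball. The only analytic ingredient is that the counting function $N_z(r):=\#\{i: d(z,x_i)\le r\}$ is non-decreasing and right-continuous in $r$. This holds because the balls are closed: $N_z$ is a step function that jumps exactly at the values $d(z,x_i)$, and it takes the value at the jump.

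\emph{First equality.} We show $\delta_{P^{\aleph_n}}(z)=r_{\mathbf k}(z)$ for every $z$, which gives $\hat Q_\beta=\{z: r_{\mathbf k}(z)\le\beta\}$ immediately. The condition $P^{\aleph_n}(B(z,r))>1/2$ is the same as $N_z(r)>n/2$. Since $N_z(r)$ is an integer, this is the same as $N_z(r)\ge \lfloor n/2\rfloor+1=\mathbf k$.

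\begin{itemize}
\item If $r\ge r_{\mathbf k}(z)$, then at least $\mathbf k$ sample points (counted with multiplicity) lie within distance $r$. So $N_z(r)\ge\mathbf k$.
\item If $r<r_{\mathbf k}(z)$, then only the points with order statistic strictly below $r_{\mathbf k}(z)$ can qualify, and there are at most $\mathbf k-1$ of them. So $N_z(r)\le \mathbf k-1$.
\end{itemize}

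Hence $\{r>0: N_z(r)\ge\mathbf k\}$ equals $[r_{\mathbf k}(z),\infty)\cap(0,\infty)$, and its infimum is $r_{\mathbf k}(z)$. The case $r_{\mathbf k}(z)=0$ needs a moment's care: the set is then $(0,\infty)$, whose infimum is still $0$. This is essentially the argument of Lemma~\ref{prop:minmax=knn}.

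\emph{Second equality.} This follows from the same dichotomy applied at $r=\beta$: we have $r_{\mathbf k}(z)\le\beta$ if and only if $N_z(\beta)\ge\mathbf k$. For $\beta=0$ there is no issue, since this is a direct statement about the order statistic and involves no infimum over $r>0$.

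\emph{Third equality.} This is pure combinatorics. Note first that $z\in\bigcap_{i\in I}B(x_i,\beta)$ if and only if $d(z,x_i)\le\beta$ for all $i\in I$, by symmetry of $d$.

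\begin{itemize}
\item If $N_z(\beta)\ge\mathbf k$, take $I=\{i: d(z,x_i)\le\beta\}$. Then $|I|\ge\mathbf k$ and $z\in\bigcap_{i\in I}B(x_i,\beta)$.
\item Conversely, if $z\in\bigcap_{i\in I}B(x_i,\beta)$ for some $I$ with $|I|\ge\mathbf k$, then $I\subset\{i: d(z,x_i)\le\beta\}$, so $N_z(\beta)\ge|I|\ge\mathbf k$.
\end{itemize}

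There is no real obstacle in this proof. The only point that needs care is the bookkeeping of ties and multiplicities, together with the fact that the infimum is attained. Both rest on using closed balls and on strict versus non-strict inequalities, specifically the conversion of $>n/2$ into $\ge\mathbf k$.
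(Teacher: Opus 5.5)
Your proof is correct and follows essentially the same route as the paper: you identify $\delta_{P^{\aleph_n}}(z)$ with the $\mathbf{k}$-th order statistic by converting $P^{\aleph_n}(B(z,r))>1/2$ into the count condition $\#\{i: d(z,x_i)\le r\}\ge \mathbf{k}$, and then read off the counting and union-of-intersections descriptions. Your explicit handling of ties and of the case $r_{\mathbf k}(z)=0$, where the infimum over $r>0$ is not attained, is slightly more careful than the paper's piecewise-constant argument, but the reasoning is the same.
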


	\begin{figure}[h]
\centering
\includegraphics[width=0.6\textwidth]{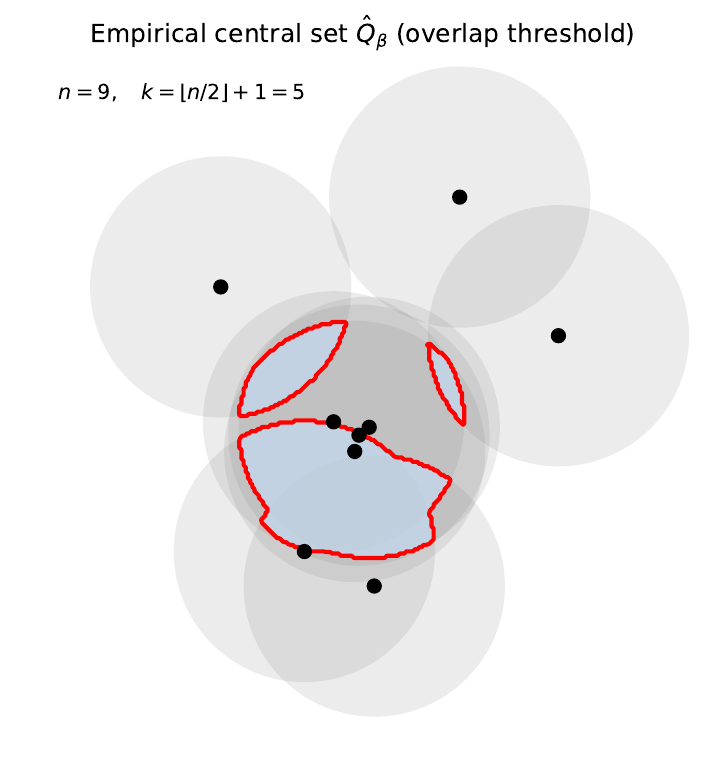}
\caption{Geometric representation of the empirical central set $\hat Q_\beta$.
Each translucent disc is a ball $B(x_i,\beta)$ centered at a sample point.
The shaded region corresponds to points covered by at least
$\mathbf{k}$ balls, which exactly defines $\hat Q_\beta$.}
\label{fig:empirical-central-set}
\end{figure}

	\begin{proof}
		For any $z\in\mathcal X$ and $r\ge 0$,
		\[
		P^{\aleph_n}\bigl(B(z,r)\bigr)
		\;=\;
		\frac1n  \#\{i:\ d(z,x_i)\le r\}.
		\]
		The function $r\mapsto P^{\aleph_n}(B(z,r))$ is non-decreasing and takes values in
		$\{0,\tfrac1n,\dots,1\}$, jumping exactly at the radii $d(z,x_i)$.

		Let $d_{(1)}(z)\le\cdots\le d_{(n)}(z)$ be the order statistics of $\{d(z,x_i)\}_{i=1}^n$.
		For $r\in[d_{(j)}(z),  d_{(j+1)}(z))$ we have
		\[
		P^{\aleph_n}(B(z,r)) \;=\; \frac{j}{n}.
		\]
		We want the smallest $r>0$ such that $P^{\aleph_n}(B(z,r))>\tfrac12$, i.e.
		\[
		\frac{j}{n} \;>\; \frac12
		\quad\Longleftrightarrow\quad
		j > \frac{n}{2}.
		\]
		The smallest integer $j$ satisfying this is
		\[
		j = \Bigl\lfloor\frac{n}{2}\Bigr\rfloor + 1 = \mathbf{k}.
		\]
		Therefore
		\[
		\delta_{P^{\aleph_n}}(z)
		\;=\;
		d_{(k)}(z)
		\;=\;
		r_\mathbf{k}(z),
		\]
		which gives the first equality
		\[
		\hat{Q}_{\beta}
		\ =\
		\{z:\ \delta_{P^{\aleph_n}}(z)\le \beta\}
		\ =\
		\{z:\ r_\mathbf{k}(z)\le \beta\}.
		\]

		Moreover, $r_\mathbf{k}(z)\le \beta$ if and only if at least $\mathbf{k}$ of the distances $d(z,x_i)$ are
		$\le \beta$, that is,
		\[
		r_\mathbf{k}(z)\le \beta
		\;\Longleftrightarrow\;
		\#\{i:\ d(z,x_i)\le \beta\}\ \ge \mathbf{k}.
		\]
		This yields the second equality.

		Finally, the condition “at least $\mathbf{k}$ of the inequalities $d(z,x_i)\le \beta$ hold” is
		equivalent to the existence of a subset $I\subset[n]$, $|I|\ge \mathbf{k}$, such that
		$d(z,x_i)\le \beta$ for all $i\in I$, i.e.\ $z\in\bigcap_{i\in I} B(x_i,\beta)$.
		Taking the union over all such $I$ gives the last equality.
	\end{proof}

	This proposition provides an explicit geometric description of $\hat Q_\beta$: it is the set of points that lie in at least $k$ balls of radius $\beta$ centered at the data points. The union-of-intersections representation is convenient for theoretical arguments and suggests practical approximations via sampling subsets $I$ or by computing coverage counts on a grid.

	\begin{prop}
		Let $(\mathcal X,d)$ be a metric space and let $x_1,\dots,x_n\in\mathcal X$.
		Fix $\beta>0$ and set for every $i$, $B_i := B(x_i,\beta)$.
		For $y\in\mathcal X$ denote
		\[
		J(y) := \{  i\in\{1,\dots,n\} : y\in B_i  \},
		\qquad t(y) := |J(y)|,
		\]
		the number of balls $B_i$ containing $y$.
		Define
		\[
		\hat{Q}_\beta
		:= \bigcup_{\substack{I\subset\{1,\dots,n\}\\ |I|>n/2}}
		\bigcap_{i\in I} B_i
		\quad \text{and} \quad
		\hat{S}_\beta
		:= \bigcap_{\substack{I\subset\{1,\dots,n\}\\ |I|>n/2}}
		\bigcup_{i\in I} B_i.
		\]
		Then the following statements hold:
		\begin{enumerate}[(i)]
			\item For every $y\in\mathcal X$,
			\[
			y\in \hat{Q}_\beta
			\;\Longleftrightarrow\;
			t(y)   \ge  \mathbf{k}
			\]
			\item For every $y\in\mathcal X$,
			\[
			y\in \hat{S}_\beta
			\;\Longleftrightarrow\;
			t(y)   \ge   \bigl\lceil\tfrac{n}{2}\bigr\rceil.
			\]
		\end{enumerate}
		In particular, we always have $\hat{Q}_\beta \subset \hat{S}_\beta$.
		Moreover, if $n$ is odd, then $\hat{Q}_\beta = \hat{S}_\beta$.
		If $n$ is even, then
		\[
		\hat{S}_\beta\setminus \hat{Q}_\beta
		\;=\;
		\bigl\{  y\in\mathcal X:\ t(y)=\tfrac{n}{2}  \bigr\},
		\]
		so $\hat{Q}_\beta = \hat{S}_\beta$ holds if and only if no point is contained in exactly $n/2$ balls.
	\end{prop}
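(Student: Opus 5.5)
Both characterizations are pure counting statements about the index set $J(y)$, and everything else follows from them. The plan is to prove (i) and (ii) separately by elementary set manipulations, and then read off the comparison between $\hat{Q}_\beta$ and $\hat{S}_\beta$ from the two thresholds.

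For (i), membership $y\in\bigcap_{i\in I}B_i$ is equivalent to $I\subset J(y)$. Hence $y\in\hat{Q}_\beta$ if and only if $J(y)$ contains some $I$ with $|I|>n/2$. Such an $I$ exists exactly when $|J(y)|>n/2$: in one direction take $I=J(y)$ itself, and in the other use monotonicity of cardinality. Since $t(y)$ is an integer, $t(y)>n/2$ is equivalent to $t(y)\ge\lfloor n/2\rfloor+1=\mathbf{k}$. This is the same argument as in the preceding proposition.

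For (ii), membership $y\in\bigcup_{i\in I}B_i$ is equivalent to $I\cap J(y)\neq\emptyset$. So $y\notin\hat{S}_\beta$ if and only if some $I$ with $|I|>n/2$ satisfies $I\subset J(y)^c$. By the same reasoning as in (i), this happens exactly when $n-t(y)>n/2$, that is, when $t(y)<n/2$. Taking complements, $y\in\hat{S}_\beta$ if and only if $t(y)\ge n/2$, and by integrality this is $t(y)\ge\lceil n/2\rceil$. The only delicate point in the whole proof is keeping the strict and non-strict inequalities straight while passing to complements, together with the integer rounding at this step.

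The remaining conclusions follow by comparing the two thresholds $\mathbf{k}=\lfloor n/2\rfloor+1$ and $\lceil n/2\rceil$:
\begin{itemize}
\item Since $\lceil n/2\rceil\le\lfloor n/2\rfloor+1$ always, (i) and (ii) give $\hat{Q}_\beta\subset\hat{S}_\beta$.
\item If $n$ is odd, then $\lceil n/2\rceil=(n+1)/2=\mathbf{k}$, so the two thresholds coincide and $\hat{Q}_\beta=\hat{S}_\beta$.
\item If $n$ is even, then $\lceil n/2\rceil=n/2$ and $\mathbf{k}=n/2+1$. The difference $\hat{S}_\beta\setminus\hat{Q}_\beta$ is therefore exactly $\{y: n/2\le t(y)<n/2+1\}=\{y: t(y)=n/2\}$.
\end{itemize}
In the even case, $\hat{Q}_\beta=\hat{S}_\beta$ holds if and only if this difference is empty, that is, if and only if no point lies in exactly $n/2$ balls.
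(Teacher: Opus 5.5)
Your proof is correct and follows essentially the same route as the paper. Both arguments reduce membership to $I\subset J(y)$ for $\hat{Q}_\beta$ and to $I\cap J(y)\neq\emptyset$ for $\hat{S}_\beta$, count the complement $\{1,\dots,n\}\setminus J(y)$, and compare the thresholds $\mathbf{k}$ and $\lceil n/2\rceil$; you additionally spell out the inclusion $\hat{Q}_\beta\subset\hat{S}_\beta$ and the odd case, which the paper's proof leaves implicit.
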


	\begin{proof}
  Note that $|I|>n/2$ 		is equivalent to $|I|\ge \mathbf{k}$ for the integer $|I|$.
		By definition,
		\[
		y\in \hat{Q}_\beta
		\;\Longleftrightarrow\;
		\exists   I\subset\{1,\dots,n\},\ |I|\ge \mathbf{k}
		\text{ such that } y\in B_i\ \forall i\in I.
		\]
		The latter condition is equivalent to $I\subset J(y)$, hence
		$y\in \hat{Q}_\beta$ if there exists a subset $I\subset J(y)$ with $|I|\ge \mathbf{k}$.
		Since $|J(y)|=t(y)$, this is the same as $t(y)\ge \mathbf{k}$:
		\[
		y\in \hat{Q}_\beta
		\;\Longleftrightarrow\;
		t(y) \ge \mathbf{k}
		\;=\;
		\bigl\lfloor\tfrac{n}{2}\bigr\rfloor+1.
		\]
		This shows (i).

		For (ii), by definition of $\hat{S}_\beta$ we have
		\[
		y\in \hat{S}_\beta
		\;\Longleftrightarrow\;
		\forall   I\subset\{1,\dots,n\}\ \text{with } |I|\ge \mathbf{k},
		\quad y\in \bigcup_{i\in I} B_i.
		\]
		Equivalently, for every such $I$ we must have $I\cap J(y)\neq\varnothing$.
		In other words, there is \emph{no} subset $I\subset\{1,\dots,n\}$ of size
		$|I|\ge \mathbf{k}$ that is contained in the complement
		$\{1,\dots,n\}\setminus J(y)$.
		Thus $y\in \hat{S}_\beta$ if and only if the complement has size $<\mathbf{k}$:
		\[
		\bigl|\{1,\dots,n\}\setminus J(y)\bigr| < \mathbf{k}.
		\]
		Since $|\{1,\dots,n\}\setminus J(y)| = n - t(y)$,
		this condition becomes
		\[
		n - t(y) < \mathbf{k}
		\;\Longleftrightarrow\;
		t(y) > n - \mathbf{k}
		\;\Longleftrightarrow\;
		t(y)   \ge   \bigl\lceil\tfrac{n}{2}\bigr\rceil.
		\]

		If $n$ is even, then
		\[\mathbf{k}		= \tfrac{n}{2}+1,
		\qquad
		\bigl\lceil\tfrac{n}{2}\bigr\rceil = \tfrac{n}{2},
		\]
		hence
		\[
		\hat{Q}_\beta
		= \{y:\ t(y)\ge \tfrac{n}{2}+1\},
		\quad
		\hat{S}_\beta
		= \{y:\ t(y)\ge \tfrac{n}{2}\}.
		\]
		It follows that
		\[
		\hat{S}_\beta\setminus \hat{Q}_\beta
		= \{  y:\ t(y)=\tfrac{n}{2}  \}.
		\]
		In particular, $\hat{Q}_\beta=\hat{S}_\beta$ holds if and only if no point $y$ is
		contained in exactly $n/2$ balls.
	\end{proof}

	The sets $\hat Q_\beta$ and $\hat S_\beta$ provide two dual ball-based views of the empirical central region: one via a union of intersections (existence of a majority of balls containing $y$), and the other via an intersection of unions (every majority contains at least one ball containing $y$). The equivalence for odd $n$ clarifies that the distinction is purely combinatorial and disappears when the ``strict majority'' threshold does not coincide with $n/2$.

	\subsection{A conservative proxy for the set $\hat{Q}_\beta$} \label{conservative}

	We now exploit the local radii around the sample points. The goal is to build simple sufficient conditions for membership in $\hat Q_\beta$ that can be checked locally (around each data point) and that yield inner approximations of $\hat Q_\beta$.

	\begin{prop}
		Let $(\mathcal X,d)$ be a metric space and let $x_1,\dots,x_n\in\mathcal X$.
		Fix an integer $k\in\{1,\dots,n-1\}$.
		For each $i\in\{1,\dots,n\}$ let $\mathcal{D}_i$ denote the distance from $x_i$ to its
		$k$-NN  among the points $\{x_j:   j\neq i\}$, that is,
		\[
		\mathcal{D}_i:= \text{$k$-th order statistic of }
		\{  d(x_i,x_j):   j\neq i  \}.
		\]
		For $\beta>0$ define
		\[
		\hat{Q}_\beta :=
		\Bigl\{  z\in\mathcal X:\
		\#\{  j\in\{1,\dots,n\}:\ d(z,x_j)\le \beta  \}\ \ge\ k+1
		\Bigr\}.
		\]
		Then for every index $i$ such that $\beta>\mathcal{D}_i$ we have the inclusion
		\[
		B(x_i,\beta - \mathcal{D}_i)\ \subset\ \hat{Q}_\beta.
		\]
	\end{prop}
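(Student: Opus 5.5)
The plan is a direct triangle-inequality argument. Fix an index $i$ with $\beta>\mathcal{D}_i$ and a point $z\in B(x_i,\beta-\mathcal{D}_i)$, so that $d(z,x_i)\le \beta-\mathcal{D}_i$. We must exhibit at least $k+1$ indices $j\in\{1,\dots,n\}$ with $d(z,x_j)\le\beta$. The natural candidates are $x_i$ itself together with its $k$ nearest neighbours among $\{x_j: j\neq i\}$.

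First, by definition of $\mathcal{D}_i$ as the $k$-th order statistic of $\{d(x_i,x_j): j\neq i\}$, counted with multiplicities and allowing ties as in the definition of $r_\mathbf{k}$, there is a set $J\subset\{1,\dots,n\}\setminus\{i\}$ with $|J|=k$ and $d(x_i,x_j)\le\mathcal{D}_i$ for all $j\in J$. This is well defined because $k\le n-1$. For each $j\in J$ the triangle inequality gives
\[
d(z,x_j)\le d(z,x_i)+d(x_i,x_j)\le (\beta-\mathcal{D}_i)+\mathcal{D}_i=\beta .
\]
For the index $i$ itself, $d(z,x_i)\le\beta-\mathcal{D}_i\le\beta$, since $\mathcal{D}_i\ge 0$.

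Hence every index in $J\cup\{i\}$ satisfies $d(z,x_j)\le \beta$. Since $i\notin J$, this set has exactly $k+1$ elements, so $\#\{j: d(z,x_j)\le\beta\}\ge k+1$ and $z\in\hat Q_\beta$. As $z$ was an arbitrary point of $B(x_i,\beta-\mathcal{D}_i)$, the inclusion follows.

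There is no real obstacle. The only point needing care is the counting: the set $J$ must have exactly $k$ distinct indices even when there are ties or repeated sample points, and $i$ must be counted in addition to $J$. Both follow from defining the order statistic with multiplicities and excluding $j=i$ in the definition of $\mathcal{D}_i$. Taking $k=\mathbf{k}-1=\lfloor n/2\rfloor$ recovers the half-mass set $\hat Q_\beta$ of the earlier propositions.
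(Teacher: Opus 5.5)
Your proposal is correct and follows the paper's proof essentially verbatim: take the $k$ nearest neighbours of $x_i$ among $\{x_j: j\neq i\}$, apply the triangle inequality to each, and add the index $i$ itself to reach $k+1$ indices. Your bound $d(z,x_i)\le\beta-\mathcal{D}_i\le\beta$ is slightly more careful than the paper's strict inequality $\beta-\mathcal{D}_i<\beta$, which fails when $\mathcal{D}_i=0$, though only $\le\beta$ is needed in either case.
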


	\begin{proof}
		Fix $i\in\{1,\dots,n\}$ with $\beta>\mathcal{D}_i$.
		By definition of $\mathcal{D}_i$, there exist at least $k$ indices
		$j_1,\dots,j_k\in\{1,\dots,n\}\setminus\{i\}$ such that
		\[
		d(x_i,x_{j_\ell})\ \le\ \mathcal{D}_i, \qquad \ell=1,\dots,k.
		\]
		Let $z\in B(x_i,\beta-\mathcal{D}_i)$, i.e.\ $d(z,x_i)\le \beta-\mathcal{D}_i$.
		For each $\ell=1,\dots,k$, using the triangle inequality we obtain
		\[
		d(z,x_{j_\ell})
		\ \le\ d(z,x_i) + d(x_i,x_{j_\ell})
		\ \le\ (\beta - \mathcal{D}_i) + \mathcal{D}_i
		\ =\ \beta.
		\]
		Thus $z\in B(x_{j_\ell},\beta)$ for every $\ell$.
		Also $z\in B(x_i,\beta)$, since $d(z,x_i)\le\beta-\mathcal{D}_i<\beta$.
		Therefore $z$ lies in the $(k+1)$ balls $B(x_i,\beta),\ B(x_{j_1},\beta),\dots,B(x_{j_k},\beta)$,
		and hence
		\[
		\#\{  j:\ d(z,x_j)\le \beta  \}\ \ge\ k+1.
		\]
		By definition, this means $z\in \hat{Q}_\beta$.
		Since $z\in B(x_i,\beta-\mathcal{D}_i)$ was arbitrary, we conclude
		\[
		B(x_i,\beta-\mathcal{D}_i)\subset \hat{Q}_\beta.
		\]
		This holds for every index $i$ with $\beta>\mathcal{D}_i$, completing the proof.
	\end{proof}

	This result yields a computationally cheap inner approximation of $\hat Q_\beta$: around any data point $x_i$ whose local $k$-NN radius $\mathcal D_i$ is smaller than $\beta$, one obtains a certified ball contained in $\hat Q_\beta$. In practice, one can compute the radii $\mathcal D_i$ from pairwise distances and then form a conservative union of balls $\bigcup_i B(x_i,\beta-\mathcal D_i)$ as a proxy for $\hat Q_\beta$, with guaranteed inclusion.

	\begin{figure}[h]
	\centering
	\includegraphics[width=0.45\textwidth]{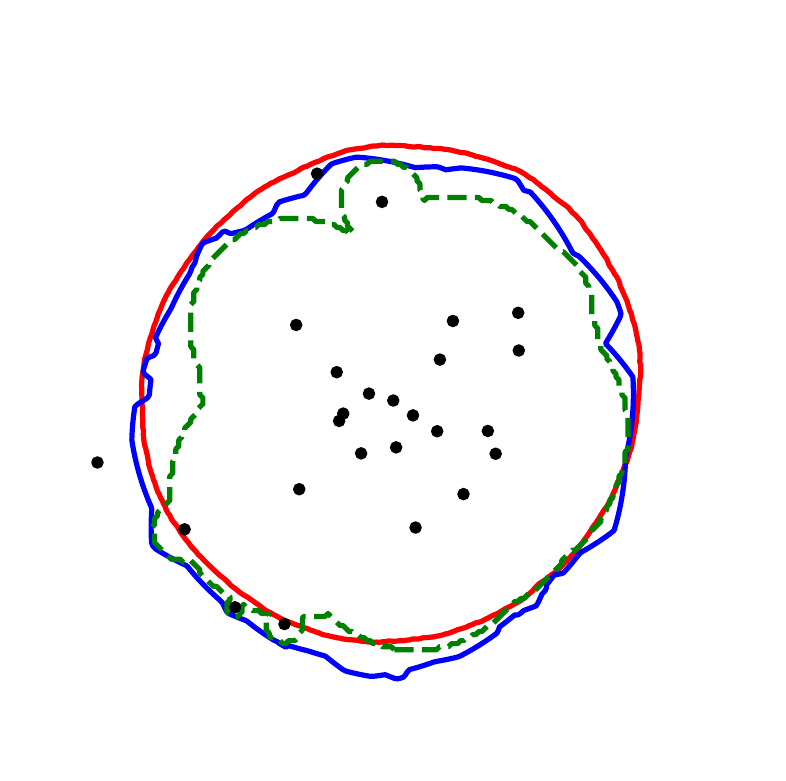}
	\includegraphics[width=0.45\textwidth]{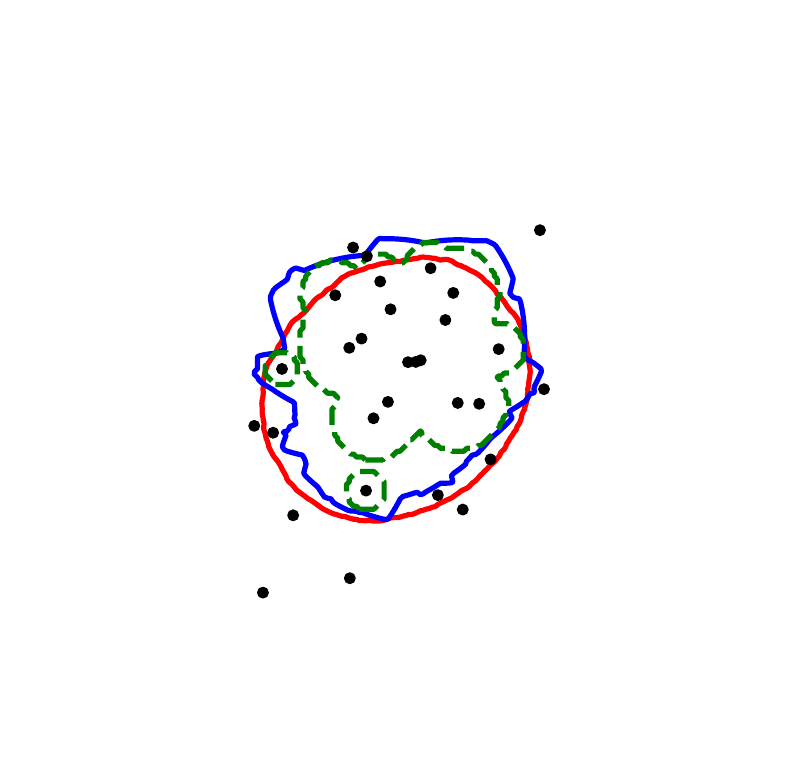}
	\includegraphics[width=0.45\textwidth]{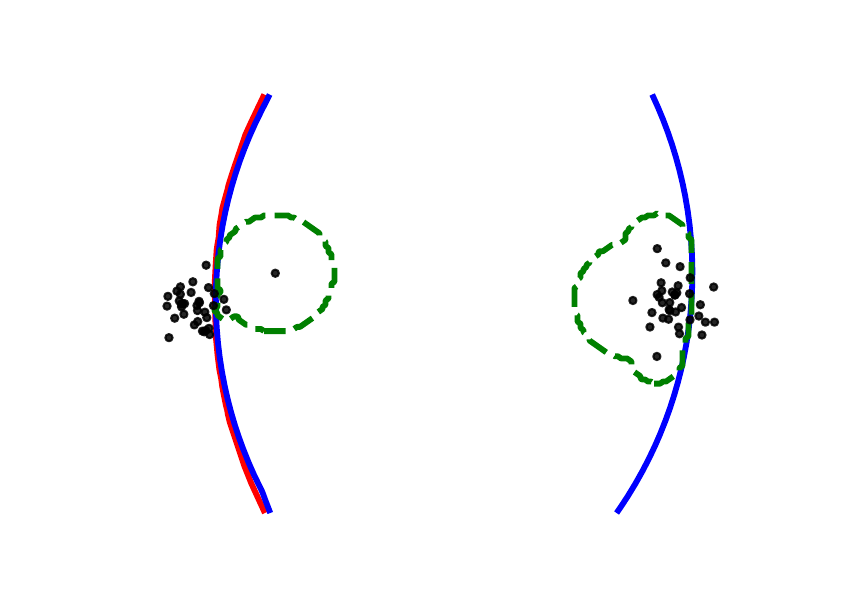}
	\caption{Comparison between the population robust central set $Q_{\beta_\alpha}$
	(red), the conformal prediction region $\gamma^\alpha(\aleph_n)$ (blue),
	and a conservative inner approximation based on local radii (green dashed).
	The figure illustrates the geometric convergence of the conformal region toward
	$Q_{\beta_\alpha}$ and the inclusion of the proxy inside
	$\gamma^\alpha(\aleph_n)$. For larger values of $\alpha$, the conservative proxy becomes visibly smaller
than the conformal region, illustrating the trade-off between robustness and
tightness.}
	\label{fig:population-gamma-proxy}
	\end{figure}

	\paragraph{Conclusion and link to the conformal set.}
	The conservative proxy above can be directly connected to the conformal prediction region. Indeed, for each candidate level $\beta\ge 0$, the empirical central set $\hat Q_\beta=\{z:\delta_{P^{\aleph_n}}(z)\le \beta\}$ is the geometric object that governs the score $R_{n+1}=\delta_{P^{\aleph_n}}(z)$ and therefore the conformal $p$-value $p_z$.
	In practice, one may first compute the conformal threshold (implicitly defined through the rank condition $p_z>\alpha$) and identify the corresponding empirical level $\hat\beta_\alpha$ such that $\gamma^\alpha(\aleph_n)$ behaves as a calibrated enlargement of $\hat Q_{\hat\beta_\alpha}$.
	Once $\hat\beta_\alpha$ is determined, the balls $B(x_i,\hat\beta_\alpha-\mathcal D_i)$ (for those $i$ such that $\hat\beta_\alpha>\mathcal D_i$) provide a certified inner approximation of $\hat Q_{\hat\beta_\alpha}$, hence yielding a computationally tractable and robust core of the conformal region. This core can be used either as a stand-alone conservative predictor, or as an initialization for refining $\hat Q_{\hat\beta_\alpha}$ (and thus $\gamma^\alpha(\aleph_n)$) via distance-counting on a grid or via local search in $\R^d$.

\bibliographystyle{apalike}
\bibliography{bibli}
\end{document}